\def\ri{\mbox{\rm ri}\,}
\def\Int{\mbox{\rm int}\,}
\def\gph{\mbox{\rm gph}\,}
\def\epi{\mbox{\rm epi}\,}
\def\dom{\mbox{\rm dom}\,}
\def\aff{\mbox{\rm aff}\,}
\def\R{\mathbb{R}}
\def\N{\mathbb{N}}
\newtheorem {theorem}{Theorem}[section]
\newtheorem {corollary}{Corollary}[section]
\newtheorem {proposition}{Proposition}[section]
\newtheorem {example}{Example}[section]
\newtheorem {definition}{Definition}[section]
\newtheorem {remark}{Remark}[section]
\title{Approximate optimality conditions and  sensitivity analysis in nearly convex optimization}
\author{Nguyen Van Tuyen$^{1}$, Liguo Jiao$^{2}$, Vu Hong Quan$^{3}$, Duong Thi Viet An$^{4}$}
\address{$^{1}$Department of Mathematics, Hanoi Pedagogical University 2, Xuan Hoa, Phuc Yen, Vinh Phuc, Vietnam}
\email{nguyenvantuyen83@hpu2.edu.vn; tuyensp2@yahoo.com}
\address{$^{2}$Academy for Advanced Interdisciplinary Studies, Northeast Normal University, Changchun, 130024, Jilin Province, China}
\email{jiaolg356@nenu.edu.cn}
\address{$^{3}$Faculty of Fundamental and Applied Sciences, Thai Nguyen University of Technology, Thai Nguyen 250000, Vietnam}
\email{hongquan@tnut.edu.vn}
\address{$^4$Department of Mathematics and Informatics, Thai Nguyen University of Sciences, Thai Nguyen 250000, Vietnam
}
\email{andtv@tnus.edu.vn}
\date{\today}
\keywords{Relative interior, near  convexity,  approximate subdifferential, approximate optimality conditions, optimal value function.}
\subjclass{90C25, 90C26, 90C31, 49J52, 49J53}
\begin{document}	
	\maketitle
	\begin{abstract}
		In this paper, approximate optimality conditions and sensitivity analysis in nearly convex optimization are discussed. More precisely,   as in the spirit of convex analysis, we introduce the concept of $\varepsilon$-subdifferential for nearly convex functions.  Then, we examine some significant properties and rules for the $\varepsilon$-subdifferential. These rules are applied to study optimality conditions as well as sensitivity analysis for parametric nearly convex optimization problems, which are two important topics in optimization theory.     
	\end{abstract}
	
	\section{Introduction}
	
	In the 1960s, Minty and Rockafellar introduced nearly convex sets~\cite{Minty_61},~\cite{Rockafellar_1970}. The nearly convex set is defined based on convexity by requiring that the set under consideration lies between a convex set and its closure in the Euclidean space $\mathbb{R}^n$. The natural reason to study nearly convex sets is that the domain and the range of any maximal monotone mapping are nearly convex. In particular, for a proper lower semicontinuous convex function its subdifferential domain is always nearly convex~\cite[Theorem 12.41]{Rockafellar_Wet}. Moreover, the classical notion of convexity and convex analysis has been extensively studied by prominent mathematicians and experts in applied fields. Another motivation is that researchers want to beyond convexity by introducing and studying many generalized convexity notions for sets and functions. All these motivate the researcher's systematic study of nearly convex sets. Some properties of nearly convex sets have been partially studied in~\cite{Bauschke_Moffat_Wang}, \cite{Bot_grad_Wanka_06}, \cite{Bot_grad_Wanka_07},\cite{Moffat_etal_2016}, and~\cite{Nam_Thieu_Yen_23} from different perspectives.
	
	\medskip
	
	Very recently, Nam and his co-workers in~\cite{Nam_Thieu_Yen_23} introduced the concept of nearly convex set-valued mappings and investigated the fundamental properties of these mappings. Additionally, the authors establish a geometric approach for generalized differentiation of nearly convex set-valued mappings and nearly convex functions. These contributions expand the current knowledge of nearly convex sets and functions while providing several new results involving nearly convex set-valued mappings. Despite being introduced in the early age of convex analysis, the notion of the near convexity had not been systematically studied in the literature. The new development also creates opportunities for further study, from nearly convex sets to nearly convex functions and nearly convex set-valued mappings.
	
	\medskip
	
	To deal with optimization problems, one often uses optimality conditions, in particular, necessary optimality conditions. Necessary optimality conditions help us solve problems through manual calculations and are useful as stopping criteria in algorithms. Meanwhile, sensitivity analysis of parametric optimization problems is not only theoretically interesting but also critically important. It allows us to understand the behaviors of the optimal value function when the parameter of the problem undergoes perturbations. Many researchers have made contributions to these research directions, for example, An and Yen~\cite{An_Yen}, An and co-authors~\cite{An_Markus-Tuyen-20}, Ioffe and Penot~\cite{Ioffe_Penot}, Jourani~\cite{Jourani}, Mordukhovich and co-authors~\cite{Mordukhovich_etal}. It is well recognized that qualification conditions are sufficient conditions for the validity of fundamental calculus rules in variational analysis, convex analysis, and optimization theory. These conditions play vital roles in deriving intersection rules for normal cones. The latter rule plays an important role in the study of optimality conditions and sensitivity analysis of parametric optimization problems. Therefore, one of the biggest challenges is to apply the subdifferential sum rules with the weakest qualification conditions.
	
	The concept of $\varepsilon$-\textit{subdifferentials} (known also as \textit{approximate subdifferentials}) for convex functions was introduced by Br\o{n}dsted and Rockafellar in~\cite{Brondsted_Rockafellar} who proved there the Br\o{n}dsted-Rockafellar density theorem and established other topological properties of $\varepsilon$-subdifferentials.  It has been realized later on that $\varepsilon$-subgradient mappings for $\varepsilon>0$ exhibit some better properties in comparison with the classical case of $\varepsilon=0.$ This made it possible to use $\varepsilon$-subgradients in constructing efficient numerical algorithms, which were started from the paper by Bertsekas and Mitter~\cite{Bertsekas_Mitter_71,Bertsekas_Mitter_73}. For more information, the reader is referred to~\cite{Dhara-Dutta-12,Hiriart_1982,H_L_1993,H_M_S,Tuyen-et-al-20,Zalinescu_2002} and the references therein.
	
	\medskip
	In the present paper,  as in the spirit of convex analysis, we introduce the concept of the $\varepsilon$-subdifferential for nearly convex functions.  We prove some significant properties and rules related to it. Then we use the $\varepsilon$-subdifferential to study optimality conditions as well as sensitivity analysis for parametric nearly convex optimization problems, which are two important topics in optimization theory. 
	
	\medskip
	
	Our paper is organized as follows. In Section 2, we provide an overview of the definition and properties of the $\varepsilon$- subdifferential of nearly convex functions. In Section 3, we present optimality conditions for nearly convex optimization problems. Sensitivity analysis for parametric nearly convex optimization problems is discussed in Section 4.
	
	\medskip
	
	Throughout the paper, the  space $\R^n$ is equipped with the usual scalar product $\langle \cdot, \cdot\rangle$ and the corresponding  Euclidean norm $\|\cdot\|$. We use the notation $B(x; \delta)$  (resp., $\mathbb{B}(x;\delta)$) to represent the open (resp., closed) ball centered at $x\in \mathbb{R}^n$ with a radius of $\delta>0.$  The interior (resp., closure) of $D$ will be denoted by $\Int D$ (resp., $\overline{D}$).
	Here $\aff D$ stands for the affine hull of a subset $D\subset \mathbb{R}^n$.
	\section{$\varepsilon$-subdifferential of nearly convex functions}
	In this section, we introduce the concept $\varepsilon$-subdifferential for nearly convex functions. Then we explore some properties of  this subdifferential.
	
	\medskip
	Let us first recall that the \textit{relative interior} of an arbitrary set $D$ in $\mathbb{R}^n$ is defined by 
	$$\ri D:=\{a\in D \mid \mbox{there exists  }\delta>0 \ \mbox{such that  } B(a;\delta) \cap \aff D \subset D\}.
	$$
By definition,	it is easy to see that $a \in \ri D$ if and only if $a \in \aff D $ and there exists
	$\delta>0$ such~that
	$$B(a;\delta) \cap \aff D\subset D.$$
	\begin{definition}\rm 
		A subset $D$ of $\mathbb{R}^n$ is said to be \textit{nearly convex} if there exists a convex set $E$ in $\mathbb{R}^n$ such that $E\subset D \subset \overline E.$   
	\end{definition}

	Clearly, any convex set is nearly convex and any nearly convex subset of $\mathbb R$ is convex. However, in $\mathbb{R}^n$ with $n\ge 2$ there are many nearly convex sets which are not convex,  see, for example \cite{Moffat_etal_2016}.

	It is not hard to check that if $D_1 \subset \mathbb{R}^n$ and $D_2 \subset \mathbb{R}^m$  are nearly convex, then $D_1 \times D_2$ is nearly convex as well.
	
	\medskip
	Consider a function  $\varphi:\mathbb{R}^n \rightarrow \overline{\mathbb{R}}$ having values in the extended real line $\overline{\mathbb{R}}:=[- \infty,  \infty]$. One says that $\varphi$ is \textit{proper} if the \textit{domain}	${\rm{dom}}\, \varphi:=\{ x \in \mathbb{R}^n \mid \varphi(x) < \infty\}$ is nonempty and if $\varphi(x) > - \infty$ for all $x \in \mathbb{R}^n$. The set $ {\rm{epi}}\, \varphi:=\{ (x, \alpha) \in \mathbb{R}^n \times \mathbb{R} \mid \alpha \ge \varphi(x)\}$ is called the \textit{epigraph} of $\varphi$. If ${\rm{epi}}\, \varphi$ is a convex (resp., nearly convex) subset of $\mathbb{R}^n\times {\mathbb{R}}$, then $\varphi$ is said to be a \textit{convex} (resp., {\it nearly convex}) function.
	Due to the continuity of a convex function $\varphi$ in $\ri(\dom \varphi),$ it knows that the ``near" convexity only happens in the boundary of its domain.

 \medskip
	Let  $G : \R^n \rightrightarrows \R^m$ be a set-valued mapping. The \textit{domain} and the \textit{graph}  of $G$ are given, respectively, by
	$${\rm dom}\,G:=\{x\in \R^n \mid G(x)\not= \emptyset\} $$
	and
	$$ {\rm gph}\,G:=\{(x,y)\in \R^n \times \R^m \mid y \in G(x)\}.$$
	The set-valued mapping $G$ is called  \textit{proper} if $\dom G \not= \emptyset.$ We say that $G$ is \textit{nearly convex}
	if its graph is a nearly convex set in $\R^n \times \R^m$.
	
	
	\begin{definition} \rm 
		Consider a proper   function $\varphi\colon \R^n\to\overline{\R}$. Let $\varepsilon\geq 0$ and $\bar x\in\dom \varphi$. The $\varepsilon$\textit{-subdifferential }of $\varphi$ at $\bar x$ is defined by
		\begin{equation*}
			\partial_\varepsilon \varphi(\bar x):=\{\xi\in\R^n \mid  \langle\xi, x-\bar x\rangle-\varepsilon\leq \varphi(x)-\varphi(\bar x),\ \ \forall x\in \R^n\}.
		\end{equation*}
	\end{definition}
	The set $\partial_\varepsilon \varphi(\bar x)$ reduces to the subdifferential $\partial \varphi(\bar x)$ when $\varepsilon=0.$ We will study some properties of the $\varepsilon$-subdifferential of nearly convex functions later. 
 
The following example shows that the traditional subdifferential $\partial \varphi(\bar x)$ may be empty, meanwhile for all $\varepsilon > 0$, the $\varepsilon$-subdifferential is nonempty. 
	\begin{example}\label{ex1}\rm 
		Consider the function $\varphi: \mathbb{R} \to \overline{\mathbb{R}}$ given by
		$$ \varphi(x)= \begin{cases} -\sqrt{x} & \mbox{if }\ \ x\in [0, 1),
			\\
			1& \mbox{if}\ \ x=1,
			\\
			\infty & \mbox{otherwise,}
		\end{cases}
		$$
		which is indeed a nearly convex function.  
		
		Let $\bar{x} = 0$ and for every $\varepsilon > 0$, by definition, one has
		\begin{align*}
			\partial_\varepsilon \varphi(\bar x)&= \{\xi \in \mathbb{R} \mid \langle \xi, x-\bar x \rangle \le \varphi(x)-\varphi(\bar x) + \varepsilon, \ \forall x\in \mathbb{R} \}\\
			&=\{\xi \in \mathbb{R} \mid  \xi x  \le \varphi(x) + \varepsilon, \ \forall x\in \mathbb{R} \}\\  
			&= \left(-\infty, -\frac{1}{4 \varepsilon} \right].
		\end{align*}
   However, it is easy to check that $\partial \varphi(\bar x)=\emptyset$. 
	\end{example}
	\begin{definition}
		\rm Let $\varepsilon\geq 0$, $\Omega$ be a nonempty subset in $\R^n$ and $\bar x\in\Omega$. The {\em $\varepsilon$-normal set} to $\Omega$ at $\bar x$ is defined by
		\begin{equation*}
			N_\varepsilon(\bar x; \Omega):=\{\xi\in\R^n\mid \langle \xi, x-\bar x\rangle\leq \varepsilon, \ \ \forall x\in\Omega\}.
		\end{equation*}
	\end{definition}
	By definition, it is clear that $N_\varepsilon(\bar x; \Omega)=\partial_\varepsilon\delta_\Omega(\bar x)$, where $\delta_\Omega$ is the {\it indicator function} of a set $\Omega \subset \mathbb{R}^n$. Recall that $\delta_\Omega (x)=0$ if $x \in \Omega$ and $ \delta_\Omega (x)=\infty$ if $x \notin \Omega$. Since $\epi\delta_\Omega = \Omega \times [0, \infty)$, we see that $\delta_\Omega$ is nearly convex if and only if $\Omega$ is nearly convex.
	
	When $\varepsilon = 0,$  $N_\varepsilon(\bar x;\Omega)$  reduces to the normal cone of $\Omega$ at $\bar x$, which is denoted by $N(\bar x;\Omega).$  However, in general $N_\varepsilon(\bar x;\Omega)$ is not a cone when $\varepsilon>0.$
	From the definition of the $\varepsilon$-normal set, one can define the $\varepsilon$-coderivative as follows.
	\begin{definition}
		\rm Let $F\colon \R^n\rightrightarrows\R^m$ be a nearly convex set-valued mapping. For each $\varepsilon\geq 0$, the $\varepsilon$-\textit{coderivative }of $F$ at $(\bar x, \bar y)\in\gph F$ is the set-valued mapping $D^*_{\varepsilon}F(\bar x, \bar y)\colon\R^m\rightrightarrows\R^n$ with the values
		\begin{equation*}
			D^*_{\varepsilon}F(\bar x, \bar y)(v):=\{u\in \mathbb{R}^n \mid (u, -v)\in N_\varepsilon ((\bar x, \bar y); \gph F)\},  \ \ v\in\R^m.
		\end{equation*}
	\end{definition}
	The following proposition gives the relation between the $\varepsilon$-subdifferential and the $\varepsilon$-coderivative of its epigraphical mapping.
	\begin{proposition}
		Let $\varphi\colon \R^n\to\overline{\R}$ be a   nearly convex function and let $\bar x\in \R^n$ be such that $\varphi(\bar x)\in\R$. For $\varepsilon\geq 0$, we have
		\begin{equation*}
			\partial_\varepsilon\varphi(\bar x)= D^*_{\varepsilon}E_\varphi(\bar x, \varphi(\bar x))(1),
		\end{equation*}
		where $E_\varphi : \R^n \rightrightarrows \R$ is the epigraphical mapping given by
		$$E_\varphi(x)=\{\lambda \in \mathbb R \mid \varphi(x) \le \lambda\},\  x\in \mathbb R^n.$$
	\end{proposition}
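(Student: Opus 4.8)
The plan is to reduce the whole statement to the single observation that $\gph E_\varphi=\epi\varphi$, after which the claimed identity becomes a matter of unwinding the definitions of the $\varepsilon$-normal set, the $\varepsilon$-coderivative, and the $\varepsilon$-subdifferential. First I would record that, since $\varphi$ is nearly convex, its epigraph is nearly convex, and because $\gph E_\varphi=\epi\varphi$ the set-valued mapping $E_\varphi$ is itself nearly convex; together with $\varphi(\bar x)\in\R$ this ensures $(\bar x,\varphi(\bar x))\in\gph E_\varphi$, so that $N_\varepsilon\big((\bar x,\varphi(\bar x));\gph E_\varphi\big)$ and $D^*_\varepsilon E_\varphi(\bar x,\varphi(\bar x))(1)$ are well defined. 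Spelling out the definitions, $\xi\in D^*_\varepsilon E_\varphi(\bar x,\varphi(\bar x))(1)$ means $(\xi,-1)\in N_\varepsilon\big((\bar x,\varphi(\bar x));\epi\varphi\big)$, that is,
$$\langle\xi,x-\bar x\rangle-\big(\lambda-\varphi(\bar x)\big)\le\varepsilon\qquad\text{for all }(x,\lambda)\in\epi\varphi. $$

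Next I would prove the two inclusions. For $\partial_\varepsilon\varphi(\bar x)\subseteq D^*_\varepsilon E_\varphi(\bar x,\varphi(\bar x))(1)$, I take $\xi\in\partial_\varepsilon\varphi(\bar x)$ and an arbitrary $(x,\lambda)\in\epi\varphi$; then $\lambda\ge\varphi(x)$, and combining this with the $\varepsilon$-subgradient inequality $\langle\xi,x-\bar x\rangle-\varepsilon\le\varphi(x)-\varphi(\bar x)$ immediately yields the displayed estimate. For the reverse inclusion, I take $\xi$ satisfying the displayed estimate and test it at the points $(x,\varphi(x))\in\epi\varphi$ with $x\in\dom\varphi$, which gives precisely $\langle\xi,x-\bar x\rangle-\varepsilon\le\varphi(x)-\varphi(\bar x)$; for $x\notin\dom\varphi$ the same inequality holds trivially because $\varphi(x)=\infty$. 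Hence $\xi\in\partial_\varepsilon\varphi(\bar x)$, and the two inclusions give the equality.

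I do not expect any genuine obstacle: the argument is essentially a bookkeeping exercise with the definitions. The only points that need a little attention are the preliminary verification that $E_\varphi$ inherits near convexity from $\varphi$ (so that the $\varepsilon$-coderivative is even defined), and, in the reverse inclusion, the case distinction between $x\in\dom\varphi$ and $x\notin\dom\varphi$.
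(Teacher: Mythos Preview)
Your proposal is correct and follows essentially the same route as the paper's proof: both identify $\gph E_\varphi=\epi\varphi$ and then verify the two inclusions by unwinding the definitions, testing the $\varepsilon$-normal inequality at $(x,\varphi(x))$ for the direction $D^*_\varepsilon E_\varphi(\bar x,\varphi(\bar x))(1)\subseteq\partial_\varepsilon\varphi(\bar x)$ and using $\lambda\ge\varphi(x)$ for the reverse. Your explicit remark that $E_\varphi$ inherits near convexity from $\varphi$ is a nice touch the paper leaves implicit; conversely, the paper pauses to note that membership in the coderivative forces $\varphi(x)>-\infty$ everywhere (otherwise $(x,\lambda)\in\epi\varphi$ for all $\lambda\in\R$ and the normal inequality fails as $\lambda\to-\infty$), which is what guarantees that your test point $(x,\varphi(x))$ actually lies in $\R^n\times\R$ when $x\in\dom\varphi$.
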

	\begin{proof}
		The proof is similar adapted from~\cite[Proposition 5.3]{Nam_Thieu_Yen_23}. For $\varepsilon \ge 0$, we first show that \begin{align}\label{inclu} D^*_{\varepsilon}E_\varphi(\bar x, \varphi(\bar x))(1) \subseteq \partial_\varepsilon\varphi(\bar x).   
		\end{align}
		By the definition of the $\varepsilon$-coderivative, one has
		\begin{align*} D^*_{\varepsilon}E_\varphi(\bar x, \varphi(\bar x))(1)&=\{u \in \mathbb{R}^n \mid (u, -1) \in N_\varepsilon ((\bar x, \varphi(\bar x)); \gph E_\varphi) \} \\
			&= \{u \in \mathbb{R}^n \mid (u, -1) \in N_\varepsilon ((\bar x, \varphi(\bar x)); \epi\varphi) \}. 
		\end{align*}
		Since $D^*_{\varepsilon}E_\varphi(\bar x, \varphi(\bar x))(1)=\emptyset$, the inclusion~\eqref{inclu} is obvious.
		We now pick an arbitrary vector $u\in D^*_{\varepsilon}E_\varphi(\bar x, \varphi(\bar x))(1)$, then
		$$ \langle u, x-\bar x \rangle - (\lambda -\varphi(\bar x)) \le \varepsilon, \ \forall(x, \lambda) \in \epi \varphi.$$
It follows from the above inequality that $-\infty < \varphi(x)$ for any $x\in \mathbb{R}^n$.   Using this equality with $x\in \dom \varphi$ and $\lambda=\varphi(x)$, one has
		$$\langle u, x-\bar x \rangle \le \varphi(x)-\varphi (\bar x) +\varepsilon, \ \forall x\in \dom \varphi.$$
		In the case, where $\varphi(x)=\infty$, the above inequality holds. So, from the latter, we get $u\in \partial_\varepsilon \varphi(\bar x)$, which justifies~\eqref{inclu}.
		
		To obtain the opposite inclusion, we take any $u\in \partial_\varepsilon \varphi(\bar x)$. Then
		$$\langle u, x-\bar x \rangle \le \varphi(x) -\varphi (\bar x) + \varepsilon, \ \forall x\in \mathbb{R}^n.$$
		This implies that 
		$$ \langle u, x-\bar x \rangle - (\lambda -\varphi(\bar x)) \le \varepsilon, \ \mbox{whenever }(x, \lambda) \in \epi \varphi.$$
		Thus, $\partial_\varepsilon \varphi(\bar x)\subset D^*_{\varepsilon}E_\varphi(\bar x, \varphi(\bar x))(1).$
		If $\partial_\varepsilon \varphi(\bar x)=\emptyset$, then the latter is obvious. Therefore, we conclude that 
		$\partial_\varepsilon\varphi(\bar x)= D^*_{\varepsilon}E_\varphi(\bar x, \varphi(\bar x))(1).$
	\end{proof}
	\medskip
	
	We now present some properties of the set $\varepsilon$-subdifferentials of nearly convex functions.
	
	\begin{proposition} Let $\varphi\colon \R^n\to\overline{\R}$ be a proper nearly convex function and $\varepsilon\geq 0$. Then, for every $\bar x\in \ri(\dom \varphi)$, the $\varepsilon$-subdifferential of $\varphi$ at $\bar x$ is nonempty, closed and convex. Furthermore, 
		\begin{equation}\label{equa_0}
			\partial \varphi(\bar x)=\bigcap_{\varepsilon>0} \partial_\varepsilon \varphi(\bar x).
		\end{equation} 
	\end{proposition}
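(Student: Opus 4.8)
The proof splits into three parts of very unequal difficulty: that $\partial_\varepsilon\varphi(\bar x)$ is closed and convex, that the identity \eqref{equa_0} holds, and that $\partial_\varepsilon\varphi(\bar x)\neq\emptyset$; only the last uses near convexity. For the first, I would simply write $\partial_\varepsilon\varphi(\bar x)=\bigcap_{x\in\dom\varphi}\{\xi\in\R^n:\langle\xi,x-\bar x\rangle\le\varphi(x)-\varphi(\bar x)+\varepsilon\}$, which exhibits it as an intersection of closed half-spaces (and copies of $\R^n$), hence a closed convex set. For \eqref{equa_0}, the inclusion ``$\subseteq$'' is clear since $\partial\varphi(\bar x)\subseteq\partial_\varepsilon\varphi(\bar x)$ for every $\varepsilon\ge 0$; conversely, if $\xi$ lies in $\partial_\varepsilon\varphi(\bar x)$ for all $\varepsilon>0$, then for each fixed $x$ the number $\langle\xi,x-\bar x\rangle-(\varphi(x)-\varphi(\bar x))$ is $\le\varepsilon$ for every $\varepsilon>0$, hence $\le 0$, so $\xi\in\partial\varphi(\bar x)$. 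Neither of these two facts uses near convexity or the hypothesis $\bar x\in\ri(\dom\varphi)$.

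For nonemptiness it suffices, since $\partial\varphi(\bar x)\subseteq\partial_\varepsilon\varphi(\bar x)$, to prove $\partial\varphi(\bar x)\neq\emptyset$, and the idea is to reduce to ordinary convex analysis via the closed convex hull of the epigraph. As $\varphi$ is nearly convex, $\epi\varphi$ is nearly convex, so $\overline{\epi\varphi}$ is closed and convex; being the closure of a set that is closed under vertical upward shifts, it is again closed under such shifts, hence $\overline{\epi\varphi}=\epi\psi$ for a lower semicontinuous convex function $\psi$ with $\psi\le\varphi$. Using the standard fact that a nearly convex set shares its relative interior and closure with the convex set sandwiching it, one gets $\ri(\epi\varphi)=\ri(\epi\psi)$; projecting onto $\R^n$ and using near convexity of $\dom\varphi$ together with $\dom\varphi\subseteq\dom\psi\subseteq\overline{\dom\varphi}$ gives $\ri(\dom\psi)=\ri(\dom\varphi)$, so in particular $\bar x\in\ri(\dom\psi)$. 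Since $\varphi$ is proper, $\varphi(\bar x)\in\R$, and the point $p:=(\bar x,\varphi(\bar x))$ cannot lie in $\ri(\epi\varphi)$ because the downward vertical segment at $\bar x$ lies in $\aff(\epi\varphi)$ but misses $\epi\varphi$. Therefore $p\in\epi\psi\setminus\ri(\epi\psi)$, i.e.\ $p$ is a relative boundary point of the closed convex set $\epi\psi$.

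Finally I would apply the proper supporting hyperplane theorem at $p$: there is $(\xi,\beta)\neq 0$ with $\langle\xi,x\rangle+\beta\mu\le\langle\xi,\bar x\rangle+\beta\varphi(\bar x)$ for all $(x,\mu)\in\epi\psi$, and $\epi\psi$ is not entirely contained in the associated hyperplane. Letting $\mu\to+\infty$ forces $\beta\le 0$. The main obstacle is excluding $\beta=0$: in that case $\langle\xi,x\rangle\le\langle\xi,\bar x\rangle$ for all $x\in\dom\psi$, and since $\bar x\in\ri(\dom\psi)$ one may move slightly past $\bar x$ away from any point of $\dom\psi$, which forces $\langle\xi,\cdot\rangle$ to be constant on $\dom\psi$; then $\epi\psi$ would lie in the hyperplane, contradicting properness. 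Hence $\beta<0$, and normalizing $\beta=-1$ gives $\langle\xi,x-\bar x\rangle\le\mu-\varphi(\bar x)$ for all $(x,\mu)\in\epi\psi$. Taking $x=\bar x$ shows $\mu\ge\varphi(\bar x)$ for every $\mu\ge\psi(\bar x)$, hence $\psi(\bar x)\ge\varphi(\bar x)$, so $\psi(\bar x)=\varphi(\bar x)$; and taking $(x,\mu)=(x,\varphi(x))$ for $x\in\dom\varphi$ (the inequality being vacuous when $\varphi(x)=+\infty$) gives $\langle\xi,x-\bar x\rangle\le\varphi(x)-\varphi(\bar x)$ for all $x\in\R^n$. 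Thus $\xi\in\partial\varphi(\bar x)$, which proves $\partial\varphi(\bar x)\neq\emptyset$ and completes the argument.
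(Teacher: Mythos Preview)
Your proof is correct. The arguments for closedness, convexity, and the identity \eqref{equa_0} match the paper's in spirit, though your one-line description of $\partial_\varepsilon\varphi(\bar x)$ as an intersection of closed half-spaces is crisper than the paper's direct verification via sequences and convex combinations.

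The real difference is in the nonemptiness argument. The paper treats $\varepsilon=0$ by citation to \cite[Proposition~5.4(b)]{Nam_Thieu_Yen_23} and then, for $\varepsilon>0$, applies the proper separation theorem for \emph{nearly convex} sets \cite[Theorem~3.2]{Nam_Thieu_Yen_23} directly to the point $(\bar x,\varphi(\bar x)-\varepsilon)$ and the set $\epi\varphi$; the description $\ri(\epi\varphi)=\{(x,\lambda):x\in\ri(\dom\varphi),\ \varphi(x)<\lambda\}$ from \cite[Proposition~3.7]{Nam_Thieu_Yen_23} shows this point lies outside $\ri(\epi\varphi)$, and the case $\beta=0$ is dispatched by observing it would properly separate $\{\bar x\}$ from $\dom\varphi$. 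You instead reduce immediately to $\varepsilon=0$ via the trivial inclusion $\partial\varphi(\bar x)\subseteq\partial_\varepsilon\varphi(\bar x)$ (an economy the paper does not exploit), and then prove $\partial\varphi(\bar x)\neq\emptyset$ by passing to the lsc convex envelope $\psi$ with $\epi\psi=\overline{\epi\varphi}$ and applying the \emph{classical} supporting hyperplane theorem at the relative boundary point $(\bar x,\varphi(\bar x))$. Your route is more self-contained, avoiding the nearly-convex separation machinery of \cite{Nam_Thieu_Yen_23}, at the cost of a short detour through the envelope $\psi$ and the verification that $\psi(\bar x)=\varphi(\bar x)$ and $\ri(\dom\psi)=\ri(\dom\varphi)$. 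The paper's route is shorter once that machinery is available, and has the minor advantage of producing an element of $\partial_\varepsilon\varphi(\bar x)$ directly without first landing in $\partial\varphi(\bar x)$.
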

	\begin{proof} We first prove that $\partial_\varepsilon \varphi(\bar x)$ is nonempty for every $\bar x\in \ri(\dom \varphi)$ and $\varepsilon\geq 0$. The nonemptiness of $\partial_\varepsilon \varphi(\bar x)$ when $\varepsilon=0$ was already proved in \cite[Proposition 5.4(b)]{Nam_Thieu_Yen_23}. We now consider the case that $\varepsilon>0$.    By the near convexity of $\varphi$ and \cite[Proposition 3.7]{Nam_Thieu_Yen_23}, one has
		\begin{equation*}
			\ri(\epi \varphi)=\{(x, \lambda)\in\R^n\times\R \mid x\in\ri(\dom \varphi), \ \varphi(x)<\lambda\}.
		\end{equation*} 
		Hence,  $\varphi(\bar x)>\varphi(\bar x)-\varepsilon$ and so $(\bar x, \varphi(\bar x)-\varepsilon)\notin \ri(\epi \varphi)$. By \cite[Theorem 3.2]{Nam_Thieu_Yen_23}, $\{(\bar x, \varphi(\bar x)-\varepsilon)\}$ and $\epi \varphi$ can be properly separated, i.e., there exist $(w, \beta)\in\R^n\times\R$ and $(x_0, \lambda_0)\in \epi \varphi$ such that the following conditions hold:
		\begin{align}
			&\langle w, \bar x\rangle+ \beta(\varphi(\bar x)-\varepsilon)\leq \langle w, x\rangle+\beta.\lambda \ \ \forall  (x, \lambda)\in \epi \varphi;\label{equa-1}
			\\
			&\langle w, \bar x\rangle+ \beta(\varphi(\bar x)-\varepsilon)< \langle w, x_0\rangle+\beta.\lambda_0.\label{equa-2}
		\end{align}
		It follows from \eqref{equa-1} that 
		\begin{equation}\label{equa-3}
			\langle w, \bar x\rangle+ \beta(\varphi(\bar x)-\varepsilon)\leq \langle w, x\rangle+\beta \varphi(x) \ \ \forall x\in\dom \varphi. 
		\end{equation}
		Taking $x=\bar x$ in the inequality \eqref{equa-3} yields $\beta\geq 0$. If $\beta=0$, then from~\eqref{equa-1} and~\eqref{equa-2}, one has $\langle w, \bar x\rangle\leq \langle w, x\rangle$ for all $x\in \dom \varphi$ and $\langle w, \bar x\rangle< \langle w, x_0\rangle$. This means that $\{\bar x\}$ and $\dom \varphi$ can be properly separated. By \cite[Theorem 3.2]{Nam_Thieu_Yen_23}, $\bar x\notin\ri(\dom \varphi)$, a contradiction. Therefore, $\beta>0$.  Dividing both sides of~\eqref{equa-3} by $\beta$, we get
		\begin{equation*}
			\left\langle-\tfrac{w}{\beta}, x-\bar x\right\rangle-\varepsilon\leq \varphi(x)-\varphi(\bar x), \ \ \forall x\in\dom \varphi.
		\end{equation*}
		Clearly, the above condition always holds for $x\notin\dom \varphi$. So, $-\tfrac{w}{\beta}\in \partial_\varepsilon \varphi(\bar x)$ and hence, $\partial_\varepsilon \varphi(\bar x)$ is nonempty.
		
		Next we will show the closedness of $\partial_\varepsilon \varphi(\bar x)$. Assume that $\xi_k\in \partial_\varepsilon \varphi(\bar x)$ such that $\xi_k\to \xi$ as $k\to\infty$. By definition, we have
		\begin{equation*} 
			\langle\xi_k, x-\bar x\rangle-\varepsilon\leq \varphi(x)-\varphi(\bar x)\ \ \text{for all}\ \  x\in \R^n \ \ \text{and}\ \ k\in \N.
		\end{equation*} 
		Taking the limit as $k\to\infty$, the above inequality leads to
		\begin{equation*} 
			\langle\xi, x-\bar x\rangle-\varepsilon\leq \varphi(x)-\varphi(\bar x),\ \ \forall  x\in \R^n
		\end{equation*} 
		and so $\xi\in \partial_\varepsilon \varphi(\bar x)$, as required.
		
		To prove the convexity of $\partial_\varepsilon \varphi(\bar x)$, let $\xi_1, \xi_2\in \partial_\varepsilon \varphi(\bar x)$ and $\lambda\in (0,1)$. Then, for each $i=1, 2$, we have
		\begin{equation*}
			\langle\xi_i, x-\bar x\rangle-\varepsilon\leq \varphi(x)-\varphi(\bar x),\ \ \forall  x\in \R^n.
		\end{equation*}
		Hence,
		\begin{align*}
			\langle\lambda\xi_1+(1-\lambda)\xi_2, x-\bar x\rangle-\varepsilon&=\lambda[\langle\xi_1, x-\bar x\rangle-\varepsilon]+(1-\lambda)[\langle\xi_2, x-\bar x\rangle-\varepsilon]
			\\
			&\leq \lambda(\varphi(x)-\varphi(\bar x))+(1-\lambda)(\varphi(x)-\varphi(\bar x))
			\\
			&=\varphi(x)-\varphi(\bar x)
		\end{align*}
		for all $x\in\R^n$. This means that $\lambda\xi_1+(1-\lambda)\xi_2\in \partial_\varepsilon\varphi(\bar x)$ and the convexity of $\partial_\varepsilon \varphi(\bar x)$ follows.
		
		Lastly, we prove \eqref{equa_0}. By definition, it is clear that $\partial \varphi(\bar x)\subset \partial_\varepsilon \varphi(\bar x)$ for every $\varepsilon>0$. Hence,
		\begin{equation*} 
			\partial \varphi(\bar x)\subset\bigcap_{\varepsilon>0} \partial_\varepsilon \varphi(\bar x).
		\end{equation*}
		Conversely, assume that $\xi\in \bigcap\limits_{\varepsilon>0} \partial_\varepsilon \varphi(\bar x)$. Then, one has
		\begin{equation*}
			\langle\xi, x-\bar x\rangle-\varepsilon\leq \varphi(x)-\varphi(\bar x)\ \ \text{for all}\ \  x\in \R^n \ \ \text{and}\ \ \varepsilon>0.
		\end{equation*}
		Taking the limit as $\varepsilon\to 0$, the above inequality leads to 
		\begin{equation*}
			\langle\xi, x-\bar x\rangle\leq \varphi(x)-\varphi(\bar x) \ \ \forall x\in\R^n.
		\end{equation*}
		Thus $\xi\in \partial \varphi(\bar x)$ and so 
		\begin{equation*} 
			\bigcap_{\varepsilon>0} \partial_\varepsilon \varphi(\bar x)\subset\partial \varphi(\bar x),
		\end{equation*}
		which completes the proof.
	\end{proof}
	
	\begin{proposition}\label{pro2.3}
		Let $\varphi \colon\R^n\to \overline{\R}$ be a proper nearly convex function and $\varepsilon\geq 0$. Then 
		\begin{equation*}
			\partial_\varepsilon(\lambda \varphi)(\bar x)=\lambda\partial_{\varepsilon/\lambda}\varphi(\bar x) \ \ \forall\lambda>0, \forall \bar x\in\dom \varphi.
		\end{equation*}
	\end{proposition}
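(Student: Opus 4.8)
The plan is to prove the identity by a direct computation from the definition of the $\varepsilon$-subdifferential, exploiting that $\lambda>0$ so that dividing the defining inequality by $\lambda$ preserves its direction. First I would record the trivial but necessary book-keeping: since $\lambda>0$, we have $\dom(\lambda\varphi)=\dom\varphi$, so $\bar x\in\dom(\lambda\varphi)$; moreover $\varphi$ is proper, hence $\varphi(\bar x)\in\R$ and $(\lambda\varphi)(\bar x)=\lambda\varphi(\bar x)\in\R$, so that all the arithmetic used below is meaningful (no $\infty-\infty$ ambiguity arises because the inequality involving $x\notin\dom\varphi$ holds vacuously).

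Next I would unfold the definition. A vector $\xi\in\R^n$ belongs to $\partial_\varepsilon(\lambda\varphi)(\bar x)$ precisely when
$\langle\xi,x-\bar x\rangle-\varepsilon\le\lambda\varphi(x)-\lambda\varphi(\bar x)$ for all $x\in\R^n$. Since $\lambda>0$, this is equivalent, upon dividing both sides by $\lambda$, to
$\langle\xi/\lambda,\,x-\bar x\rangle-\varepsilon/\lambda\le\varphi(x)-\varphi(\bar x)$ for all $x\in\R^n$, which is exactly the statement $\xi/\lambda\in\partial_{\varepsilon/\lambda}\varphi(\bar x)$. Writing $\eta:=\xi/\lambda$, this reads $\xi=\lambda\eta$ with $\eta\in\partial_{\varepsilon/\lambda}\varphi(\bar x)$, i.e. $\xi\in\lambda\,\partial_{\varepsilon/\lambda}\varphi(\bar x)$. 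Because every step in this chain is an equivalence, both inclusions follow at once, and the case where one (hence both) of the sets is empty is covered automatically.

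There is essentially no obstacle here: the only points requiring a word of care are that the tolerance parameter is correspondingly rescaled to $\varepsilon/\lambda$ (which is the entire content of the proposition) and that the hypothesis $\lambda>0$ is indispensable — for $\lambda<0$ the inequality would reverse and the formula would take a different shape (involving a conjugate-type term), while $\lambda=0$ is excluded since $\varepsilon/\lambda$ is then undefined. I would present the argument in two lines, as the equivalence chain above, without further elaboration.
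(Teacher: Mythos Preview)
Your proposal is correct and follows essentially the same approach as the paper: unfold the defining inequality of $\partial_\varepsilon(\lambda\varphi)(\bar x)$, divide by $\lambda>0$, and recognize the result as the defining inequality for $\xi/\lambda\in\partial_{\varepsilon/\lambda}\varphi(\bar x)$. Your version is slightly more explicit about the domain bookkeeping and the vacuous case $x\notin\dom\varphi$, but the argument is identical to the paper's two-line equivalence chain.
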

	
	\begin{proof} For $\lambda>0$, by the definition of the $\varepsilon$-subdifferential one has 
		\begin{align*}
			\xi\in   \partial_\varepsilon (\lambda \varphi)(\bar x) \ &\Leftrightarrow \langle \xi, x-\bar x \rangle -\varepsilon \le \lambda \varphi(x)-\lambda\varphi(\bar x), \ \forall x\in \dom \varphi \\  &\Leftrightarrow \tfrac{1}{\lambda}\langle \xi, x-\bar x\rangle -\tfrac{\varepsilon}{\lambda} \le \varphi(x)-\varphi(\bar x), \ \forall x\in \dom \varphi\\ 
			&\Leftrightarrow \tfrac{\xi}{\lambda} \in \partial_{\varepsilon/\lambda} \varphi (\bar x),\\
			&\Leftrightarrow \xi \in \lambda \partial_{\varepsilon/\lambda}\varphi(\bar x).
		\end{align*}
  The proof is complete.
	\end{proof}
	
	In convex analysis and optimization, summing two functions plays an important role. The Moreau–Rockafellar Theorem can be viewed as a well-known result, which describes the subdifferential of the sum of two subdifferentiable functions. In the same way, one can get a sum rule for $\varepsilon$-subdifferentials of two nearly convex functions.
	
	Firstly, we formally introduce the notion of conjugate below.
	\begin{definition}\rm 
		Consider a function $\varphi: \mathbb{R}^n \to \overline{\mathbb{R}}$. The \textit{conjugate} of $\varphi$ is
		$\varphi^*: \mathbb{R}^n \to \overline{\mathbb{R}}$ and is defined as
		$$\varphi^*(\xi)= \sup\limits_{x\in \mathbb{R}^n}\{\langle\xi,x\rangle-\varphi(x)\}.$$
	\end{definition}
	In order to study optimality conditions as well as sensitivity analysis of optimization problems, we need the subdifferential calculus rules, especially, sum rules play important roles among them. 
	The following proof for sum rules of nearly convex functions is based on the proof of~\cite[Theorem 3.1.1]{H_L_1993} and~\cite[Theorem 2.115]{Dhara-Dutta-12}.
	\begin{theorem}\label{sum-rule}
		Let $\varphi_i\colon\R^n\to\overline{\R}$, $i=1, 2$, be two proper nearly convex functions and $\varepsilon\ge 0$.   If the qualification condition
		\begin{equation}\label{QC_sum_rule}
			\ri(\dom\varphi_1)\cap\ri(\dom\varphi_2)\neq \emptyset
		\end{equation}
		holds, then  $\varphi_1+\varphi_2$ is nearly convex and
		\begin{equation*}
			\partial_\varepsilon (\varphi_1+\varphi_2)(\bar x)=\bigcup_{\substack{\varepsilon_1\geq 0, \varepsilon_2\geq 0, \\ \varepsilon_1+\varepsilon_2=\varepsilon}}[\partial_{\varepsilon_1}\varphi_1 (\bar x)+\partial_{\varepsilon_2}\varphi_2 (\bar x)]
		\end{equation*} 
		for all $\bar x\in \dom\varphi_1\cap\dom\varphi_2$.
	\end{theorem}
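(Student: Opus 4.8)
The plan is to prove the two inclusions in Theorem~\ref{sum-rule} separately; the inclusion ``$\supseteq$'' will need no qualification condition, while ``$\subseteq$'' will rest on the proper separation theorem for nearly convex sets, \cite[Theorem 3.2]{Nam_Thieu_Yen_23}. For ``$\supseteq$'': if $\xi_i\in\partial_{\varepsilon_i}\varphi_i(\bar x)$ with $\varepsilon_1+\varepsilon_2=\varepsilon$, adding the two defining inequalities immediately gives $\xi_1+\xi_2\in\partial_\varepsilon(\varphi_1+\varphi_2)(\bar x)$. To see that $\varphi_1+\varphi_2$ is nearly convex, I would realize $\epi(\varphi_1+\varphi_2)$ as the image under the linear map $(x,s,t)\mapsto(x,s+t)$ of the set $P_1\cap P_2$, where $P_1:=\epi\varphi_1\times\R$ and $P_2:=\{(x,s,t)\mid(x,t)\in\epi\varphi_2\}$ inside $\R^n\times\R\times\R$. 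Both $P_1$ and $P_2$ are nearly convex (products of nearly convex sets are nearly convex, and $P_2$ is a coordinate permutation of $\epi\varphi_2\times\R$); by \cite[Proposition 3.7]{Nam_Thieu_Yen_23} their relative interiors intersect precisely because \eqref{QC_sum_rule} holds, so $P_1\cap P_2$ is nearly convex; and a linear image of a nearly convex set is nearly convex. Hence $\varphi_1+\varphi_2$ is a nearly convex function.

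For ``$\subseteq$'', let $\xi\in\partial_\varepsilon(\varphi_1+\varphi_2)(\bar x)$. First I would reduce to $\xi=0$ by replacing $\varphi_1$ with $\varphi_1-\langle\xi,\cdot\rangle$, which is again nearly convex (an affine shear of $\epi\varphi_1$), has the same domain, and satisfies $\partial_\delta(\varphi_1-\langle\xi,\cdot\rangle)(\bar x)=\partial_\delta\varphi_1(\bar x)-\xi$. Thus assume $0\in\partial_\varepsilon(\varphi_1+\varphi_2)(\bar x)$, so that $m:=\inf_{x\in\R^n}\bigl(\varphi_1(x)+\varphi_2(x)\bigr)$ is finite and $\varepsilon':=\varphi_1(\bar x)+\varphi_2(\bar x)-m\in[0,\varepsilon]$. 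Now the crucial step is a separation in $\R^n\times\R$ of $A:=\epi\varphi_1$ from $B:=\{(x,\beta)\mid\beta\le m-\varphi_2(x)\}$; the latter is the image of $\epi\varphi_2$ under the affine bijection $(x,\gamma)\mapsto(x,m-\gamma)$, hence nearly convex. Using \cite[Proposition 3.7]{Nam_Thieu_Yen_23} one checks $\ri A\cap\ri B=\emptyset$, since a common point $(x,\rho)$ would satisfy $\varphi_1(x)<\rho<m-\varphi_2(x)$, i.e. $\varphi_1(x)+\varphi_2(x)<m$. By \cite[Theorem 3.2]{Nam_Thieu_Yen_23} there is $(w,\beta_0)\neq(0,0)$ with $\langle w,x_A\rangle+\beta_0 s_A\ge\langle w,x_B\rangle+\beta_0 s_B$ for all $(x_A,s_A)\in A$ and $(x_B,s_B)\in B$, the separation being proper.

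The rest is extraction. Since $A$ is an epigraph, letting $s_A\to+\infty$ forces $\beta_0\ge0$; after excluding $\beta_0=0$ (the delicate point, discussed below) I may normalize $\beta_0=1$, which yields a scalar $c$ with $\varphi_1(x)\ge c-\langle w,x\rangle$ and $\varphi_2(x)\ge\langle w,x\rangle+m-c$ for all $x\in\R^n$. Putting $\varepsilon_1:=\varphi_1(\bar x)+\langle w,\bar x\rangle-c\ge0$ and $\varepsilon_2:=\varphi_2(\bar x)-\langle w,\bar x\rangle-(m-c)\ge0$, these two inequalities say precisely that $-w\in\partial_{\varepsilon_1}\varphi_1(\bar x)$, $w\in\partial_{\varepsilon_2}\varphi_2(\bar x)$, while $\varepsilon_1+\varepsilon_2=\varepsilon'\le\varepsilon$. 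Since $\partial_\delta\varphi_1(\bar x)$ is nondecreasing in $\delta$, I would enlarge $\varepsilon_1$ by $\varepsilon-\varepsilon'$ so that the indices sum to $\varepsilon$; then $0=(-w)+w$ lies in the right-hand union, and undoing the reduction (adding $\xi$ back into the first summand) gives $\xi\in\partial_{\varepsilon_1}\varphi_1(\bar x)+\partial_{\varepsilon_2}\varphi_2(\bar x)$.

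The main obstacle is ruling out $\beta_0=0$. Unlike the convex case there is no interior point to invoke; instead, if $\beta_0=0$ then $\langle w,\cdot\rangle$ separates $\dom\varphi_1$ from $\dom\varphi_2$, and since by \eqref{QC_sum_rule} these domains share a point $x_0$ of their common relative interior, $\langle w,\cdot\rangle$ attains an extremum over each $\dom\varphi_i$ at the relative interior point $x_0$, hence is constant on $\aff(\dom\varphi_1)$ and on $\aff(\dom\varphi_2)$, thus on $A\cup B$ --- contradicting properness of the separation. This is exactly where the qualification condition is used. A secondary, purely technical matter is assembling the near convexity of $\varphi_1+\varphi_2$ from the stability of near convexity under products, linear images, and intersections whose relative interiors meet; an alternative to the separation argument, closer to \cite[Theorem 3.1.1]{H_L_1993} and \cite[Theorem 2.115]{Dhara-Dutta-12}, would combine the identity $\xi\in\partial_\varepsilon\psi(\bar x)\iff\psi(\bar x)+\psi^*(\xi)\le\langle\xi,\bar x\rangle+\varepsilon$ with the exact infimal-convolution formula for $(\varphi_1+\varphi_2)^*$.
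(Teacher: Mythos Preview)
Your argument is correct, but it follows a genuinely different route from the paper. The paper proves ``$\subseteq$'' via conjugate functions: it rewrites $\xi\in\partial_\varepsilon(\varphi_1+\varphi_2)(\bar x)$ as $(\varphi_1+\varphi_2)(\bar x)+(\varphi_1+\varphi_2)^*(\xi)\le\langle\xi,\bar x\rangle+\varepsilon$, then invokes the exact infimal-convolution formula $(\varphi_1+\varphi_2)^*(\xi)=\min\{\varphi_1^*(\xi_1)+\varphi_2^*(\xi_2)\mid\xi_1+\xi_2=\xi\}$ from \cite[Theorem~6.6]{Huy_Nam_Yen_23} (valid precisely under \eqref{QC_sum_rule}), and reads off $\varepsilon_i=\varphi_i^*(\xi_i)+\varphi_i(\bar x)-\langle\xi_i,\bar x\rangle$. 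This is exactly the alternative you mention in your last sentence. Your primary approach instead reproves that Fenchel-duality step from scratch: after reducing to $\xi=0$ you properly separate $\epi\varphi_1$ from the reflected hypograph $B$ using \cite[Theorem~3.2]{Nam_Thieu_Yen_23}, rule out a vertical hyperplane via \eqref{QC_sum_rule}, and extract the pair $(-w,w)$ with the right $\varepsilon_i$. Your route is more self-contained (no black-box appeal to \cite{Huy_Nam_Yen_23}) and makes the geometric role of the qualification condition explicit, at the price of being longer; the paper's route is shorter because the heavy lifting has been outsourced. Similarly, for the near convexity of $\varphi_1+\varphi_2$ the paper simply cites \cite[Corollary~4.3]{Nam_Thieu_Yen_23}, whereas you reconstruct it from stability under products, intersections with meeting relative interiors, and linear images.
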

	\begin{proof}  The near convexity  of $\varphi_1+\varphi_2$ follows from \cite[Corollary 4.3]{Nam_Thieu_Yen_23}. Now, let    $\varepsilon_1 \ge 0$ and $\varepsilon_2 \ge 0$  be such that $\varepsilon_1+\varepsilon_2=\varepsilon$. The inclusion
		$$ \bigcup_{\substack{\varepsilon_1\geq 0, \varepsilon_2\geq 0, \\ \varepsilon_1+\varepsilon_2=\varepsilon}}[\partial_{\varepsilon_1}\varphi_1 (\bar x)+\partial_{\varepsilon_2}\varphi_2 (\bar x)] \subset \partial_\varepsilon (\varphi_1+\varphi_2)(\bar x)$$
		is based on the concept of $\varepsilon$-subdifferential. We now prove the opposite of this inclusion. Suppose that $\xi\in \partial_\varepsilon (\varphi_1+\varphi_2)(\bar x)$. By the definition of
		the $\varepsilon$-subdifferential, one has
		$$(\varphi_1+\varphi_2)(x)- (\varphi_1+\varphi_2)(\bar x) \ge \langle \xi, x-\bar x \rangle -\varepsilon, \ \forall x\in \mathbb{R}^n.$$
		From the definition of the conjugate function, $\xi\in \partial_\varepsilon (\varphi_1+\varphi_2)(\bar x)$ means that
		\begin{align}\label{ct1_new} (\varphi_1+\varphi_2)^*(\xi) +(\varphi_1+\varphi_2)(\bar x) -\langle \xi, \bar x\rangle \le \varepsilon.
		\end{align}
		Since the condition~\eqref{QC_sum_rule} holds, by~\cite[Theorem 6.6]{Huy_Nam_Yen_23} we have
			$$(\varphi_1+\varphi_2)^*(\xi)=(\varphi^*_1 \square \varphi^*_2)(\xi):=\inf\left\{\varphi^*_1(\xi_1) + \varphi^*_2(\xi_2) \mid \xi_1 + \xi_2 = \xi \right\}, \ \forall \xi \in \mathbb{R}^n$$
		and the infimum is attained. In other words, one can find $\xi_i\in \mathbb{R}^n, i=1,2$ with $\xi_1+\xi_2 =\xi$ such that $$(\varphi_1+\varphi_2)^*(\xi) =\varphi^*_1(\xi_1) +\varphi^*_2(\xi_2).$$
		Combining the latter with~\eqref{ct1_new} yields that
		$$(\varphi^*_1(\xi_1)+\varphi_1(\bar x) -\langle \xi_1, \bar x \rangle ) + (\varphi^*_2(\xi_2)+\varphi_2(\bar x) -\langle \xi_2, \bar x \rangle ) \le \varepsilon.$$
		Then, by defining $\varepsilon_i=\varphi^*_i(\xi_i)+\varphi_i(\bar x)- \langle \xi_i, \bar x \rangle, \ i=1,2$, one has $\varepsilon_i \ge 0$ and $\varepsilon_1+\varepsilon_2 \le \varepsilon$ for $i=1,2.$
		Moreover, by the definition of the conjugate function,
		\begin{align*}
			\varphi_i(x)-\varphi_i(\bar x) &\ge \langle \xi_i, x-\bar x \rangle-\varepsilon_i
			\\
			&\ge \langle \xi_i, x-\bar x \rangle-\bar\varepsilon_i,
		\end{align*}
		where $\bar \varepsilon_i=\varepsilon_i + \tfrac{\varepsilon-\varepsilon_1 -\varepsilon_2}{2}\ge \varepsilon_i, i=1,2$. So, for $i=1,2$,
		$\xi_i\in \partial_{\bar \varepsilon_i}\varphi_i(\bar x),$ 
		with $\bar\varepsilon_1+\bar \varepsilon_2=\varepsilon$. Therefore,
		$\xi=\xi_1+\xi_2\in \partial_{\bar \varepsilon_1} \varphi_1(\bar x) + \partial_{\bar \varepsilon_2} \varphi_2(\bar x) $.
		Since $\xi \in \partial_\varepsilon (\varphi_1+\varphi_2)(\bar x)$ was taken arbitrarily, we have
		$$\partial_\varepsilon (\varphi_1+\varphi_2)(\bar x) \subset \bigcup_{\substack{\varepsilon_1\geq 0, \varepsilon_2\geq 0, \\ \varepsilon_1+\varepsilon_2=\varepsilon}}[\partial_{\varepsilon_1}\varphi_1 (\bar x)+\partial_{\varepsilon_2}\varphi_2 (\bar x)],$$
		which completes the proof of the proposition.
	\end{proof}
	We consider the following illustrative example.
	\begin{example}\rm 
		Consider the nearly convex functions $\varphi_i: \R \to \overline{\mathbb{R}}$, $i=1,2,$ given, respectively,~by
		$$ \varphi_1(x)= \begin{cases} -\sqrt{x} & \mbox{if }\ \ x\in [0, 1),
			\\
			1& \mbox{if}\ \ x=1,
			\\
			\infty & \mbox{otherwise,}
		\end{cases}
		$$
		and
		$$ \varphi_2(x)= \begin{cases} -\sqrt{x} & \mbox{if } x\in [0, 1],
			\\
			\infty & \mbox{ otherwise.}
		\end{cases}
		$$  
		Then, one has
		$$ (\varphi_1+\varphi_2)(x)= \begin{cases} -2\sqrt{x} & \mbox{if } x\in [0, 1),
			\\
			0& \mbox{ if } x=1,\\
			\infty & \mbox{ otherwise}.
		\end{cases}
		$$
		Let $\bar x=0$. Obviously, $\bar x\in \dom \varphi_1 \cap \dom \varphi_2$ and the condition~\eqref{QC_sum_rule} holds. By similar way as in Example~\ref{ex1}, 
		for every $\varepsilon_1 > 0, \varepsilon_2>0$, one obtains
		\begin{align*}
			\partial_{\varepsilon_1} \varphi_1(\bar x)= \left(-\infty, -\tfrac{1}{4 \varepsilon_1} \right] \quad {\textrm{ and }} \quad \partial_{\varepsilon_2} \varphi_2(\bar x)= \left(-\infty, -\tfrac{1}{4 \varepsilon_2} \right].
		\end{align*}
  In the case, where $\varepsilon_1=0$ (resp., $\varepsilon_2=0$) one has $\partial_{\varepsilon_1}\varphi_1(\bar x)=\emptyset$ (resp., $\partial_{\varepsilon_2}\varphi_2(\bar x)=\emptyset$).
		Meanwhile, for $\varepsilon>0$, one has
		$$\partial_{\varepsilon} (\varphi_1+\varphi_2)(\bar x)= \left(-\infty, -\tfrac{1}{\varepsilon} \right].$$
		Consequently,  $$\partial_\varepsilon (\varphi_1+\varphi_2)(\bar x) = \bigcup_{\substack{\varepsilon_1\geq 0, \varepsilon_2\geq 0, \\ \varepsilon_1+\varepsilon_2=\varepsilon}}[\partial_{\varepsilon_1}\varphi_1 (\bar x)+\partial_{\varepsilon_2}\varphi_2 (\bar x)].$$  
	\end{example}
	
	\medskip
	
	When $\varepsilon=0$, one has the following corollary.
	\begin{corollary}
		Let $\varphi_i\colon\R^n\to\overline{\R}$, $i=1, 2$, be two proper nearly convex functions.   If~\eqref{QC_sum_rule} holds, then
		one has
		\begin{equation}\label{exact_sum_rule}
			\partial(\varphi_1+\varphi_2)(\bar x)=\partial\varphi_1 (\bar x)+\partial\varphi_2 (\bar x)
		\end{equation} 
		for all $\bar x\in \dom\varphi_1\cap\dom\varphi_2$.     
	\end{corollary}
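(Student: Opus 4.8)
The plan is simply to specialize Theorem~\ref{sum-rule} to the case $\varepsilon=0$. Since the qualification condition~\eqref{QC_sum_rule} is assumed, Theorem~\ref{sum-rule} applies and yields, for every $\bar x\in\dom\varphi_1\cap\dom\varphi_2$, that $\varphi_1+\varphi_2$ is nearly convex and
\begin{equation*}
	\partial_0(\varphi_1+\varphi_2)(\bar x)=\bigcup_{\substack{\varepsilon_1\geq 0,\ \varepsilon_2\geq 0,\\ \varepsilon_1+\varepsilon_2=0}}[\partial_{\varepsilon_1}\varphi_1(\bar x)+\partial_{\varepsilon_2}\varphi_2(\bar x)].
\end{equation*}

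The only step is to observe that the index set of this union is the singleton $\{(0,0)\}$: if $\varepsilon_1,\varepsilon_2\geq 0$ and $\varepsilon_1+\varepsilon_2=0$, then necessarily $\varepsilon_1=\varepsilon_2=0$. Hence the right-hand side collapses to $\partial_0\varphi_1(\bar x)+\partial_0\varphi_2(\bar x)$. Recalling the remark following the definition of the $\varepsilon$-subdifferential, one has $\partial_0\psi(\bar x)=\partial\psi(\bar x)$ for any proper function $\psi$, so the displayed identity becomes exactly~\eqref{exact_sum_rule}. The near convexity of $\varphi_1+\varphi_2$ requires no separate argument, as it is already part of the conclusion of Theorem~\ref{sum-rule}.

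There is essentially no obstacle here: the corollary is an immediate consequence of the more general $\varepsilon$-calculus result. (Were Theorem~\ref{sum-rule} not available, one would have to reprove directly the nontrivial inclusion $\partial(\varphi_1+\varphi_2)(\bar x)\subseteq\partial\varphi_1(\bar x)+\partial\varphi_2(\bar x)$ via the conjugate identity $(\varphi_1+\varphi_2)^{*}=\varphi_1^{*}\square\varphi_2^{*}$ with the infimum attained under~\eqref{QC_sum_rule}, precisely as in the proof of Theorem~\ref{sum-rule}; but that work has already been carried out.)
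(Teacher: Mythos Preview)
Your proof is correct and matches the paper's approach exactly: the paper simply introduces this corollary with the sentence ``When $\varepsilon=0$, one has the following corollary'' and gives no separate argument. Your explicit observation that the index set $\{(\varepsilon_1,\varepsilon_2)\geq 0:\varepsilon_1+\varepsilon_2=0\}$ reduces to $\{(0,0)\}$ is precisely the one-line specialization the paper has in mind.
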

	
	Let us consider an example that shows that~\eqref{exact_sum_rule} can be invalid providing that~\eqref{QC_sum_rule} is violated.
	\begin{example}\rm 
			Consider the following two functions:
		$$ \varphi_1(x)=\begin{cases}
			-\sqrt{x} & \mbox{if } x \in [0,1],\\
			\infty & \mbox{otherwise},
		\end{cases}$$
		and
		$$ \varphi_2(x)=\begin{cases}
			-\sqrt{-x} & \mbox{if } x \in [-1,0],\\
			\infty & \mbox{otherwise}.
		\end{cases}$$
		Clearly, $\varphi_1$ and $\varphi_2$ are convex functions. In particular, $\varphi_1$ and $\varphi_2$ are nearly convex functions. 
		By a simple calculation, one gets
		$$ (\varphi_1+\varphi_2)(x)=\begin{cases}
			0 & \mbox{if } x =0,\\
			\infty & \mbox{otherwise}.
		\end{cases}$$
		Let $\bar x = 0.$ On one hand, it is not difficult to see that 
		$\partial (\varphi_1+\varphi_2)(\bar x)=\mathbb{R}.$ On the other hand, one has $\partial\varphi_1 (\bar x)=\emptyset.$ Consequently,~\eqref{exact_sum_rule} does not hold. It is emphasized here that $\ri (\dom \varphi_1) \cap \ri (\dom\varphi_2) =\emptyset.$
	\end{example}
	
	\medskip
	
	From Theorem~\ref{sum-rule}, and the relationship between the $\varepsilon$-normal set and the $\varepsilon$-subdifferential  of the indicator function, the following corollary is obtained.
	\begin{corollary}\label{normal_rule} Let $\Omega_1,$ $\Omega_2$ be two nearly convex sets and $\varepsilon\geq 0$. If $\ri\Omega_1\cap\ri\Omega_2\neq \emptyset,$ then for each $\bar x\in \Omega_1\cap\Omega_2,$ we have
		\begin{equation*}
			N_\varepsilon(\bar x; \Omega_1\cap\Omega_2)=\bigcup_{\substack{\varepsilon_1\geq 0, \varepsilon_2\geq 0, \\ \varepsilon_1+\varepsilon_2=\varepsilon}}[N_{\varepsilon_1} (\bar x; \Omega_1)+N_{\varepsilon_2} (\bar x; \Omega_2)].
		\end{equation*}	 
		
	\end{corollary}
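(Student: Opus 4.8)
The plan is to derive this corollary directly from Theorem~\ref{sum-rule} by specializing the two nearly convex functions to be indicator functions. First I would set $\varphi_i:=\delta_{\Omega_i}$ for $i=1,2$. As already observed in the excerpt, $\delta_{\Omega_i}$ is nearly convex precisely because $\Omega_i$ is nearly convex, and it is proper since $\Omega_i\neq\emptyset$; moreover $\dom\delta_{\Omega_i}=\Omega_i$, so the qualification condition~\eqref{QC_sum_rule} for $\varphi_1,\varphi_2$ reads $\ri\Omega_1\cap\ri\Omega_2\neq\emptyset$, which is exactly our hypothesis. The last routine observation is the pointwise identity $\delta_{\Omega_1}+\delta_{\Omega_2}=\delta_{\Omega_1\cap\Omega_2}$, valid on all of $\R^n$.

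Next I would invoke Theorem~\ref{sum-rule}: since~\eqref{QC_sum_rule} holds for $\varphi_1,\varphi_2$, the function $\delta_{\Omega_1}+\delta_{\Omega_2}=\delta_{\Omega_1\cap\Omega_2}$ is nearly convex (so in particular $\Omega_1\cap\Omega_2$ is a nearly convex set), and for every $\bar x\in\dom\varphi_1\cap\dom\varphi_2=\Omega_1\cap\Omega_2$ one has
\begin{equation*}
\partial_\varepsilon(\delta_{\Omega_1}+\delta_{\Omega_2})(\bar x)=\bigcup_{\substack{\varepsilon_1\geq 0,\ \varepsilon_2\geq 0,\\ \varepsilon_1+\varepsilon_2=\varepsilon}}\bigl[\partial_{\varepsilon_1}\delta_{\Omega_1}(\bar x)+\partial_{\varepsilon_2}\delta_{\Omega_2}(\bar x)\bigr].
\end{equation*}
Finally I would rewrite both sides using the identification $N_\delta(\bar x;\Omega)=\partial_\delta\delta_\Omega(\bar x)$ recorded after the definition of the $\varepsilon$-normal set: the left-hand side becomes $N_\varepsilon(\bar x;\Omega_1\cap\Omega_2)$ and each summand $\partial_{\varepsilon_i}\delta_{\Omega_i}(\bar x)$ becomes $N_{\varepsilon_i}(\bar x;\Omega_i)$, which yields the claimed formula.

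There is essentially no serious obstacle here; the proof is a direct translation. The only point that requires a moment's care is the bookkeeping of domains and properness—checking that $\bar x\in\Omega_1\cap\Omega_2$ guarantees $\delta_{\Omega_1\cap\Omega_2}$ is proper and that $\bar x$ lies in $\dom\varphi_1\cap\dom\varphi_2$ so that Theorem~\ref{sum-rule} applies at that point—together with noting that the near convexity of $\Omega_1\cap\Omega_2$ comes for free from the near convexity of $\delta_{\Omega_1}+\delta_{\Omega_2}$ asserted in Theorem~\ref{sum-rule}.
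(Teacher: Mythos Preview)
Your proposal is correct and follows exactly the approach indicated by the paper, which simply states that the corollary is obtained from Theorem~\ref{sum-rule} together with the identification $N_\varepsilon(\bar x;\Omega)=\partial_\varepsilon\delta_\Omega(\bar x)$. Your write-up spells out the details (properness, domains, the identity $\delta_{\Omega_1}+\delta_{\Omega_2}=\delta_{\Omega_1\cap\Omega_2}$) that the paper leaves implicit.
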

	The next corollary plays an important role in deriving the subsequent results, so one presents here a detailed proof.
	\begin{corollary}[Epsilon coderivative of sum]
		Suppose that $F_1, F_2: \mathbb{R}^n \rightrightarrows \mathbb{R}^m$ are nearly convex set-valued mappings and $\varepsilon\ge 0$. If the qualification condition
		\begin{align}
			\label{QC}
			\ri (\dom F_1) \cap \ri (\dom F_2)  \not=\emptyset
		\end{align}
		is satisfied, then one has
		\begin{equation}\label{coder-sum-rule}
			D^*_\varepsilon(F_1+F_2)(\bar x, \bar y)(v)=\bigcup_{\substack{\varepsilon_1\geq 0, \varepsilon_2\geq 0, \\ \varepsilon_1+\varepsilon_2=\varepsilon}}[D^*_{\varepsilon_1} F_1(\bar x, \bar y_1)(v)+D^*_{\varepsilon_2}F_2 (\bar x, \bar y_2)(v)] 
		\end{equation}
		holds for every $v\in \mathbb{R}^m$ and $(\bar y_1, \bar y_2) \in S(\bar x, \bar y)$, where
		$$S(\bar x, \bar y)=\{(\bar y_1, \bar y_2) \in \mathbb{R}^m \times \mathbb{R}^m \mid \bar y=\bar y_1 +\bar y_2, \ \bar y_i\in F_i(\bar x), i=1,2 \}.$$
	\end{corollary}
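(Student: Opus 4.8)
The plan is to reduce the coderivative sum rule to the intersection formula for $\varepsilon$-normal sets in Corollary~\ref{normal_rule}, by lifting everything to the product space $\R^n\times\R^m\times\R^m$. First I would introduce the auxiliary sets
$$\Omega_1:=\{(x,y_1,y_2)\mid (x,y_1)\in\gph F_1\},\qquad \Omega_2:=\{(x,y_1,y_2)\mid (x,y_2)\in\gph F_2\},$$
so that $\Omega_1\cap\Omega_2=\{(x,y_1,y_2)\mid y_1\in F_1(x),\ y_2\in F_2(x)\}$. Since each $\gph F_i$ is nearly convex and a product (up to a permutation of coordinates) of nearly convex sets is nearly convex, both $\Omega_1$ and $\Omega_2$ are nearly convex; likewise $\gph(F_1+F_2)$, being the image of $\Omega_1\cap\Omega_2$ under the linear map $(x,y_1,y_2)\mapsto(x,y_1+y_2)$, is nearly convex, so $D^*_\varepsilon(F_1+F_2)$ is well defined.

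The second step is to convert the qualification condition~\eqref{QC} into the hypothesis of Corollary~\ref{normal_rule}. Using that the relative interior commutes with Cartesian products and with the canonical projection onto $\R^n$ for nearly convex sets (so that $\ri(\dom F_i)$ is the projection of $\ri(\gph F_i)$), I pick $x_0\in\ri(\dom F_1)\cap\ri(\dom F_2)$ together with points $y_i^0$ satisfying $(x_0,y_i^0)\in\ri(\gph F_i)$; then $(x_0,y_1^0,y_2^0)\in\ri\Omega_1\cap\ri\Omega_2$, so $\ri\Omega_1\cap\ri\Omega_2\neq\emptyset$.

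Next I would unwind the definitions. Fix $(\bar y_1,\bar y_2)\in S(\bar x,\bar y)$, $v\in\R^m$ and $u\in\R^n$. Because $y\in(F_1+F_2)(x)$ means $y=y_1+y_2$ with $y_i\in F_i(x)$, and $\bar y=\bar y_1+\bar y_2$, a direct check from the definition of the $\varepsilon$-normal set gives that $(u,-v)\in N_\varepsilon((\bar x,\bar y);\gph(F_1+F_2))$ if and only if $(u,-v,-v)\in N_\varepsilon((\bar x,\bar y_1,\bar y_2);\Omega_1\cap\Omega_2)$. Moreover, one computes that $N_{\varepsilon_1}((\bar x,\bar y_1,\bar y_2);\Omega_1)$ consists exactly of the vectors $(a,b,0)$ with $(a,b)\in N_{\varepsilon_1}((\bar x,\bar y_1);\gph F_1)$ (the last component is forced to vanish by the freedom in $y_2$), and symmetrically $N_{\varepsilon_2}((\bar x,\bar y_1,\bar y_2);\Omega_2)$ consists of the vectors $(a,0,c)$ with $(a,c)\in N_{\varepsilon_2}((\bar x,\bar y_2);\gph F_2)$. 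Applying Corollary~\ref{normal_rule} to $\Omega_1\cap\Omega_2$ at $(\bar x,\bar y_1,\bar y_2)$ and matching components, any decomposition $(u,-v,-v)=(a_1,b_1,0)+(a_2,0,c_2)$ forces $b_1=c_2=-v$ and $u=a_1+a_2$, which by the definition of the $\varepsilon$-coderivative is precisely $a_1\in D^*_{\varepsilon_1}F_1(\bar x,\bar y_1)(v)$ and $a_2\in D^*_{\varepsilon_2}F_2(\bar x,\bar y_2)(v)$; reassembling the union over $\varepsilon_1+\varepsilon_2=\varepsilon$ yields~\eqref{coder-sum-rule}.

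The main obstacle I anticipate is not any single computation but assembling the auxiliary-set machinery correctly: verifying that $\Omega_1$, $\Omega_2$ and $\gph(F_1+F_2)$ are nearly convex, and --- the genuinely delicate point --- checking that the relative interior passes through the coordinate projection for nearly convex sets, since this is what turns~\eqref{QC}, phrased with domains, into the hypothesis $\ri\Omega_1\cap\ri\Omega_2\neq\emptyset$ needed to invoke Corollary~\ref{normal_rule}. Everything else is routine bookkeeping with inner products and the definitions of $N_\varepsilon$ and $D^*_\varepsilon$.
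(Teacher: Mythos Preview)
Your proposal is correct and follows essentially the same route as the paper: both lift to the product space via the auxiliary sets $\Omega_1,\Omega_2$, translate~\eqref{QC} into $\ri\Omega_1\cap\ri\Omega_2\neq\emptyset$, apply Corollary~\ref{normal_rule}, and read off the decomposition componentwise. The only cosmetic differences are that the paper outsources the near convexity of $F_1+F_2$ and the relative-interior translation to \cite[Theorem~4.2]{Nam_Thieu_Yen_23} rather than sketching them, and it proves only the inclusion ``$\subset$'' explicitly (declaring the reverse obvious), whereas your ``if and only if'' between $(u,-v)\in N_\varepsilon(\cdot;\gph(F_1+F_2))$ and $(u,-v,-v)\in N_\varepsilon(\cdot;\Omega_1\cap\Omega_2)$ handles both directions at once.
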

	\begin{proof}
		We first note that, the qualification condition~\eqref{QC} implies that $F_1+F_2$ is nearly convex by~\cite[Theorem 4.2]{Nam_Thieu_Yen_23}. For any $\varepsilon\ge 0$, $(\bar y_1, \bar y_2) \in S(\bar x, \bar y)$ and $v\in \mathbb{R}^m$, we take any $u\in  D^*_\varepsilon(F_1+F_2)(\bar x, \bar y)(v).$ By definition, one has 
		\begin{align}\label{ct2}
			(u, -v)\in N_\varepsilon ((\bar x, \bar y); \gph (F_1+F_2)). 
		\end{align}
		Consider two sets $\Omega_1$ and $\Omega_2$ defined as follows:
		\begin{align*}
			\Omega_1&=\{(x,y_1,y_2) \in \mathbb{R}^n\times \mathbb{R}^m \times \mathbb{R}^m \mid y_1 \in F_1(x) \}=\gph F_1 \times \mathbb{R}^m,\\
			\Omega_2&=\{(x,y_1,y_2) \in \mathbb{R}^n\times \mathbb{R}^m \times \mathbb{R}^m \mid y_2 \in F_2(x)\}.
		\end{align*}
		By the construction of $\Omega_1$ and $\Omega_2$, from~\eqref{ct2}, one has
		$$ (u,-v,-v) \in N_\varepsilon ((\bar x, \bar y_1, \bar y_2); \Omega_1 \cap \Omega_2). $$
		The qualification condition~\eqref{QC} implies that $\ri \Omega_1 \cap \ri \Omega_2 \not=\emptyset$ (see the proof of Theorem 4.2 in~\cite{Nam_Thieu_Yen_23}). So, one can apply Corollary~\ref{normal_rule} to obtain 
		$$(u, -v, -v) \in N_\varepsilon ((\bar x, \bar y_1, \bar y_2); \Omega_1 \cap \Omega_2) =\bigcup_{\substack{\varepsilon_1\geq 0, \varepsilon_2\geq 0, \\ \varepsilon_1+\varepsilon_2=\varepsilon}}[N_{\varepsilon_1} ((\bar x, \bar y_1, \bar y_2); \Omega_1) + N_{\varepsilon_2} ((\bar x, \bar y_1, \bar y_2); \Omega_2)]. $$
		Hence, there exist $\varepsilon_1\geq 0$ and  $\varepsilon_2\geq 0$ with $\varepsilon_1+\varepsilon_2=\varepsilon$ such that
		$$(u, -v, -v)\in N_{\varepsilon_1} ((\bar x, \bar y_1, \bar y_2); \Omega_1) + N_{\varepsilon_2} ((\bar x, \bar y_1, \bar y_2); \Omega_2).$$
		Consequently, one gets
		$$(u, -v, -v)=(u_1, v_1,w_1)+(u_2, v_2, w_2),$$
		where $(u_i, v_i, w_i)\in N_{\varepsilon_i}((\bar x, \bar y_1, \bar y_2); \Omega_i)$, $i=1, 2$. It is easy to see that $w_1=0$, $v_2=0$,  $(u_1, v_1)\in N_{\varepsilon_1}((\bar x, \bar y_1); \gph F_1)$, and $(u_2, w_2)\in N_{\varepsilon_2}((\bar x, \bar y_2); \gph F_2)$. So, $v_1=w_2=-v$ and $u=u_1+u_2$. 
		The latter means that 
		$$u=u_1+u_2 \in D^*_{\varepsilon_1} F_1(\bar x, \bar y_1)(v)+D^*_{\varepsilon_2}F_2 (\bar x; \bar y_2)(v).$$
		Hence,
		$$D^*_\varepsilon(F_1+F_2)(\bar x, \bar y)(v)\subset\bigcup_{\substack{\varepsilon_1\geq 0, \varepsilon_2\geq 0, \\ \varepsilon_1+\varepsilon_2=\varepsilon}}[D^*_{\varepsilon_1} F_1(\bar x, \bar y_1)(v)+D^*_{\varepsilon_2}F_2 (\bar x; \bar y_2)(v)]. $$
		The reverse of the above inclusion is obvious and we therefore get the sum rule \eqref{coder-sum-rule}. 
	\end{proof}
	We now discuss the $\varepsilon$-coderivative of the intersection mapping.
	\begin{theorem}
		Let $F_i: \mathbb{R}^n \rightrightarrows \mathbb{R}^m,$ for $i=1,2,\ldots,p$ be nearly convex set-valued mappings and let $F=\bigcap_{i=1}^p F_i$. Assume that the following qualification condition
		\begin{align}
			\label{CQ_set_valued} \bigcap\limits_{i=1}^p \ri (\gph F_i)\not=\emptyset
		\end{align}
		holds. Then, for any $\varepsilon \ge 0$ and $  (\bar x, \bar y)\in \gph F$ one has
		\begin{align}
			\label{inter_formula}
			D^*_\varepsilon F (\bar x, \bar y) (v)\!\!=\!\! \bigcup_{\substack{\varepsilon_1\geq 0,\ldots \varepsilon_p\geq 0, \\ \varepsilon_1+\ldots+\varepsilon_p=\varepsilon}} \!\{D^*_{\varepsilon_1} F_1 (\bar x, \bar y)(v_1)\!+...+\! D^*_{\varepsilon_p} F_p (\bar x, \bar y)(v_p) \mid\! v_1+...+v_p \!=\!v\}.
		\end{align}
	\end{theorem}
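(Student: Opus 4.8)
The plan is to reduce the statement to a $p$-fold intersection rule for $\varepsilon$-normal sets, which follows from Corollary~\ref{normal_rule} by induction on $p$, and then to unwind the definition of the $\varepsilon$-coderivative. First I would prove the following auxiliary fact: if $\Omega_1,\dots,\Omega_p$ are nearly convex subsets of a Euclidean space with $\bigcap_{i=1}^p\ri\Omega_i\neq\emptyset$, then for every $\bar z\in\bigcap_{i=1}^p\Omega_i$ and every $\varepsilon\ge 0$,
\begin{equation*}
N_\varepsilon\Big(\bar z;\bigcap_{i=1}^p\Omega_i\Big)=\bigcup_{\substack{\varepsilon_1\geq 0,\ldots,\varepsilon_p\geq 0,\\ \varepsilon_1+\cdots+\varepsilon_p=\varepsilon}}\big[N_{\varepsilon_1}(\bar z;\Omega_1)+\cdots+N_{\varepsilon_p}(\bar z;\Omega_p)\big].
\end{equation*}
The base case $p=2$ is exactly Corollary~\ref{normal_rule}. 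For the step from $p-1$ to $p$, I would set $\Omega:=\bigcap_{i=1}^{p-1}\Omega_i$; using the distributivity of the relative interior over finite intersections of nearly convex sets sharing a common relative-interior point (a property of nearly convex sets, see~\cite{Nam_Thieu_Yen_23}), one gets $\ri\Omega=\bigcap_{i=1}^{p-1}\ri\Omega_i$, so $\Omega$ is nearly convex and $\ri\Omega\cap\ri\Omega_p\neq\emptyset$. Applying Corollary~\ref{normal_rule} to the pair $\Omega,\Omega_p$ expresses $N_\varepsilon(\bar z;\Omega\cap\Omega_p)$ as the union over $\eta\ge 0$, $\varepsilon_p\ge 0$, $\eta+\varepsilon_p=\varepsilon$, of $N_\eta(\bar z;\Omega)+N_{\varepsilon_p}(\bar z;\Omega_p)$; expanding $N_\eta(\bar z;\Omega)$ by the induction hypothesis and merging the two nested unions over nonnegative parameters summing to $\varepsilon$ would yield the claimed formula.

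Next I would specialize this to $\Omega_i=\gph F_i$. Since $\gph F=\bigcap_{i=1}^p\gph F_i$ and the qualification condition~\eqref{CQ_set_valued} is precisely $\bigcap_{i=1}^p\ri(\gph F_i)\neq\emptyset$, the auxiliary rule gives, for any $(\bar x,\bar y)\in\gph F$,
\begin{equation*}
N_\varepsilon\big((\bar x,\bar y);\gph F\big)=\bigcup_{\substack{\varepsilon_1\geq 0,\ldots,\varepsilon_p\geq 0,\\ \varepsilon_1+\cdots+\varepsilon_p=\varepsilon}}\big[N_{\varepsilon_1}\big((\bar x,\bar y);\gph F_1\big)+\cdots+N_{\varepsilon_p}\big((\bar x,\bar y);\gph F_p\big)\big],
\end{equation*}
and, as a by-product, $F$ itself is nearly convex. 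Finally I would translate back: by definition $u\in D^*_\varepsilon F(\bar x,\bar y)(v)$ iff $(u,-v)\in N_\varepsilon((\bar x,\bar y);\gph F)$, so writing $(u,-v)=\sum_{i=1}^p(u_i,w_i)$ with $(u_i,w_i)\in N_{\varepsilon_i}((\bar x,\bar y);\gph F_i)$, $\sum_{i=1}^p\varepsilon_i=\varepsilon$, and putting $v_i:=-w_i$ gives $\sum_{i=1}^p v_i=v$, $u=\sum_{i=1}^p u_i$ and $u_i\in D^*_{\varepsilon_i}F_i(\bar x,\bar y)(v_i)$; this is the inclusion ``$\subset$'' in~\eqref{inter_formula}, and the reverse inclusion is obtained by running the computation backwards. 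The edge case where one of the sets involved is empty is handled exactly as in the earlier proofs.

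The step I expect to be the main obstacle is the induction in the first paragraph: propagating the qualification condition through the induction requires the distributivity of $\ri$ over finite intersections of nearly convex sets, and one must reindex the nested unions carefully so that $(\varepsilon_1,\dots,\varepsilon_p)$ ranges exactly over the nonnegative $p$-tuples with sum $\varepsilon$. Once the $p$-fold $\varepsilon$-normal intersection rule is in place, the specialization to graphs and the passage to the $\varepsilon$-coderivative are a routine unwinding of the definitions.
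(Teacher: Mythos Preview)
Your proposal is correct and follows essentially the same approach as the paper: reduce to a $p$-fold $\varepsilon$-normal intersection rule for the graphs $\gph F_i$ via Corollary~\ref{normal_rule}, then unwind the definition of the $\varepsilon$-coderivative to get both inclusions in~\eqref{inter_formula}. The paper's proof invokes Corollary~\ref{normal_rule} directly for $p$ sets without spelling out the induction or the propagation of the qualification condition; your proposal fills in exactly these details (the $\ri$-distributivity and the reindexing of nested unions), so if anything it is a more careful execution of the same argument.
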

	\begin{proof}
		By the qualification condition~\eqref{CQ_set_valued}, the set-valued mapping $F=\bigcap_{i=1}^p F_i$ is nearly convex following~\cite[Theorem 4.9]{Nam_Thieu_Yen_23}. For $(\bar x, \bar y)\in \gph F$, take any $u\in D^*_\varepsilon F (\bar x, \bar y) (v)$. Then $(u,-v)\in N_\varepsilon ((\bar x, \bar y);\gph F)$. Since $\gph F =\bigcap\limits_{i=1}^p \gph F_i$, by~\eqref{CQ_set_valued} and Corollary~\ref{normal_rule} one has
		\begin{align}\label{ct17}
			(u,-v)\!\in\! N_\varepsilon ((\bar x, \bar y); \gph F)\!\!=\!\!\bigcup_{\substack{\varepsilon_1\geq 0,\ldots, \varepsilon_p\geq 0, \\ \varepsilon_1+\ldots+\varepsilon_p=\varepsilon}} \!\big[ N_{\varepsilon_1}((\bar x, \bar y); \gph F_1)\!+\!\ldots\!+\!N_{\varepsilon_p}((\bar x, \bar y); \gph F_p) \big].
		\end{align}
		Then, we can find $u_1,\ldots,u_p \in \mathbb{R}^n $ and $v_1,\ldots,v_p\in \mathbb{R}^m$ such that
		$$ u=\sum\limits_{i=1}^p u_i, \ v=\sum\limits_{i=1}^p v_i, \ (u_i,-v_i)\in N_{\varepsilon_i} ((\bar x, \bar y);\gph F_i).$$
		By the definition, $u_i\in D^*_{\varepsilon_i} F_i(\bar x, \bar y)(v_i)$ and hence
		$$u\in \bigcup_{\substack{\varepsilon_1\geq 0,\ldots, \varepsilon_p\geq 0, \\ \varepsilon_1+\ldots+\varepsilon_p=\varepsilon}}  \big[ N_{\varepsilon_1}((\bar x, \bar y); \gph F_1)\!+\!\ldots\!+\!N_{\varepsilon_p}((\bar x, \bar y); \gph F_p) \big] ,$$
		which justifies the inclusion $``\subseteq"$ in~\eqref{inter_formula}. The reverse inclusion follows directly from the definition of the $\varepsilon$-coderivative and the equality in~\eqref{ct17}.
	\end{proof}
	
	\section{Optimality conditions}
	In this section, we will study the optimality conditions for nearly convex optimization problems by using the $\varepsilon$-subdifferential.
	Consider the following optimization problem
	\begin{equation}\label{problem}
		\inf\, \varphi(x) \ \ \text{subject to}\ \ x\in S,\tag{P}	
	\end{equation} 
	where $S\subset \R^n$ is nonempty and $\varphi\colon\R^n\to\overline{\R}$ is bounded from below on $S$. When $S=\mathbb{R}^n$, we call~\eqref{problem} the unconstrained optimization problem.
	\begin{definition}
		\rm 
		Let $\varepsilon>0$ and $\bar x\in S$. A vector $\bar x$ is said to be an \textit{$\varepsilon$-solution }of \eqref{problem} if 
		\begin{equation*}
			\varphi(\bar x)\leq \varphi(x)+\varepsilon \ \ \forall x\in S.
		\end{equation*}
	\end{definition}
	\begin{remark}\rm 
		From the definition of the $\varepsilon$-subdifferential, it is easy to see that for $\varepsilon\ge 0$, $\bar x$ is an $\varepsilon$-solution of the unconstrained optimization problem if and only if $0\in \partial_\varepsilon \varphi(\bar x).$  
	\end{remark} 
	
	We are now in a position to prove the necessary as well as the sufficient approximate optimality conditions of the nearly optimization problem~\eqref{problem}.
	\begin{theorem}\label{Theorem-1} Assume that $\varphi$ is a nearly convex function, $S$ is a nearly convex set, and the following qualification condition holds
		\begin{equation}\label{RC_1}
			\ri(\dom \varphi)\cap\ri S\neq \emptyset.
		\end{equation} 
		Then, $\bar x\in S$ is an $\varepsilon$-solution of \eqref{problem} if and only if there exist $\varepsilon_1, \varepsilon_2\geq 0$ with $\varepsilon_1+\varepsilon_2=\varepsilon$ such~that
		\begin{equation}\label{ct20}
			0\in \partial_{\varepsilon_1}\varphi(\bar x)+ N_{\varepsilon_2}(\bar x; S).
		\end{equation} 
	\end{theorem}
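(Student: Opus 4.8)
The plan is to reduce the constrained problem \eqref{problem} to an unconstrained one by means of the indicator function $\delta_S$ and then to apply the $\varepsilon$-subdifferential sum rule of Theorem~\ref{sum-rule}. Assume, as is implicit in the setting of \eqref{problem}, that $\bar x\in\dom\varphi$ (otherwise the problem degenerates). Since $\bar x\in S$, for every $x\in\R^n$ one has $(\varphi+\delta_S)(\bar x)=\varphi(\bar x)$, and $(\varphi+\delta_S)(x)=\varphi(x)$ if $x\in S$ while $(\varphi+\delta_S)(x)=\infty$ otherwise. Hence $\bar x$ is an $\varepsilon$-solution of \eqref{problem} if and only if
\[
(\varphi+\delta_S)(\bar x)\le (\varphi+\delta_S)(x)+\varepsilon\qquad\text{for all }x\in\R^n,
\]
that is, $\bar x$ is an $\varepsilon$-solution of the unconstrained problem $\inf_{x\in\R^n}(\varphi+\delta_S)(x)$. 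By the remark preceding the theorem, this is equivalent to $0\in\partial_\varepsilon(\varphi+\delta_S)(\bar x)$.

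Next, recall that $\delta_S$ is a proper nearly convex function (because $S$ is nonempty and nearly convex) with $\dom\delta_S=S$, so the qualification condition \eqref{RC_1} is exactly the condition \eqref{QC_sum_rule} for the pair $\varphi_1=\varphi$, $\varphi_2=\delta_S$, and $\bar x\in\dom\varphi\cap\dom\delta_S$. Applying Theorem~\ref{sum-rule} therefore yields
\[
\partial_\varepsilon(\varphi+\delta_S)(\bar x)=\bigcup_{\substack{\varepsilon_1\ge 0,\ \varepsilon_2\ge 0\\ \varepsilon_1+\varepsilon_2=\varepsilon}}\big[\partial_{\varepsilon_1}\varphi(\bar x)+\partial_{\varepsilon_2}\delta_S(\bar x)\big].
\]
Using the identity $\partial_{\varepsilon_2}\delta_S(\bar x)=N_{\varepsilon_2}(\bar x;S)$ recorded after the definition of the $\varepsilon$-normal set, we see that $0\in\partial_\varepsilon(\varphi+\delta_S)(\bar x)$ holds if and only if there exist $\varepsilon_1,\varepsilon_2\ge 0$ with $\varepsilon_1+\varepsilon_2=\varepsilon$ and $0\in\partial_{\varepsilon_1}\varphi(\bar x)+N_{\varepsilon_2}(\bar x;S)$, which is precisely \eqref{ct20}. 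Chaining this with the equivalence established in the first paragraph completes the proof.

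Since each step is a direct invocation of an already-proved fact, there is no genuine obstacle here; the only points requiring a little care are the harmless bookkeeping in the degenerate case $\bar x\notin\dom\varphi$ and the verification that \eqref{RC_1} is literally the sum-rule qualification \eqref{QC_sum_rule} once one rewrites $\dom\delta_S=S$.
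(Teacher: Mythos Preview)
Your proof is correct and follows essentially the same route as the paper: reduce \eqref{problem} to the unconstrained minimization of $\varphi+\delta_S$, characterize the $\varepsilon$-solution by $0\in\partial_\varepsilon(\varphi+\delta_S)(\bar x)$, and then apply Theorem~\ref{sum-rule} together with $\partial_{\varepsilon_2}\delta_S(\bar x)=N_{\varepsilon_2}(\bar x;S)$. The only difference is that you spell out the reduction and the verification that \eqref{RC_1} matches \eqref{QC_sum_rule} in slightly more detail than the paper does.
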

	\begin{proof}
		Using the fact that $\bar x$ is an $\varepsilon$-solution of \eqref{problem} if and only if $0\in \partial_\varepsilon(\varphi+\delta_S(\cdot))(\bar x)$. In addition, we note that $\dom \delta_S = S$.
		Now, applying Theorem \ref{sum-rule}, where $\varphi$ and $\delta_S$ play the corresponding roles of $\varphi_1$ and $\varphi_2$, one has
		\begin{align*}
			\partial_\varepsilon(\varphi+\delta_S(\cdot))(\bar x) &= \bigcup_{\substack{\varepsilon_1\geq 0, \varepsilon_2\geq 0, \\ \varepsilon_1+\varepsilon_2=\varepsilon}}[\partial_{\varepsilon_1}\varphi (\bar x)+\partial_{\varepsilon_2}\delta_S (\bar x)]\\
			&=\bigcup_{\substack{\varepsilon_1\geq 0, \varepsilon_2\geq 0, \\ \varepsilon_1+\varepsilon_2=\varepsilon}}[\partial_{\varepsilon_1}\varphi (\bar x)+N_{\varepsilon_2}(\bar x;S)],
		\end{align*}
		which justifies~\eqref{ct20}.
	\end{proof}
	We now give an illustration for Theorem~\ref{Theorem-1}.
	\begin{example}\rm 
		Consider the problem~\eqref{problem} where $\varphi: \mathbb{R}\to \overline{\R}$ and $S$ are given, respectively,~by
		$$ \varphi(x)=\begin{cases}
			|x| & \mbox{ if } x\in [-1,1),\\
			2 & \mbox{ if } x=1,\\
			\infty &\mbox{ otherwise},
		\end{cases}$$
		and $S=[0, \infty).$ It is clear that $\varphi$ is a nearly convex function, $S$ is a convex set, and~\eqref{RC_1} is satisfied. In this case, we observe that $\bar x=0$ is an $\varepsilon$-solution of~\eqref{problem} for every $\varepsilon \ge 0.$ Meanwhile, for $\varepsilon_1 \ge 0$, $\varepsilon_2 \ge 0$,
		by the example in~\cite[pp. 93–94]{H_L_1993} one has $\partial_{\varepsilon_1} \varphi (\bar x)=[-1,1]$. Additionally, from~\cite[Example 2.1]{Hiriart_1989} $N_{\varepsilon_2} (\bar x; S)= N (\bar x; S)=(-\infty, 0].$ Consequently,
		$$0\in \partial_{\varepsilon_1} \varphi(\bar x) +  N_{\varepsilon_2} (\bar x; S).$$  
	\end{example}
	
	\medskip
	Given a nearly convex function  $f: \R^n\times \R^p\to \overline{\R}$ and a nearly set-valued mapping $G: \R^n \rightrightarrows \R^p$, we consider the \textit{parametric nearly convex optimization problem under an inclusion constraint}
	\begin{align}\label{Px}\tag{$P_x$}
		\quad \quad \quad  	\min\{f(x,y)\mid y \in G(x)\} 
	\end{align}
	depending on the parameter $x$.
	
	The next theorem describes the necessary and sufficient approximate optimality conditions for $(P_x)$ at a given parameter $\bar x \in \R^n$. 
	
	\begin{theorem}Let $\bar x \in \R^n$. Suppose that $f(\bar x, \cdot)$ is bounded from below on $G(\bar x)$ and the following qualification condition
		\begin{align*}
			\ri (\dom f(\bar x, \cdot) ) \cap \ri ( G(\bar x))\not= \emptyset
		\end{align*}
		is fulfilled.
		Then, $\bar y\in G(\bar {x})$ is an  $\varepsilon$-solution of \eqref{Px} if and only if there exist $\varepsilon_1, \varepsilon_2 \ge 0$ with $\varepsilon_1+\varepsilon_2=\varepsilon$ such that
		\begin{align} \label{NS_condition}
			0 \in \partial^y_{\varepsilon_1} f(\bar x, \bar y) + N_{\varepsilon_2}(\bar y; G(\bar x)),
		\end{align}
		where $\partial^y_{\varepsilon_1} f(\bar x, \bar y) $ is the $\varepsilon_1$-subdifferential of the function $f(\bar x, \cdot)$ with respect to the variable $y$.
	\end{theorem}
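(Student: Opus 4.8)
The plan is to observe that, with the parameter frozen at $\bar x$, the problem \eqref{Px} is precisely the constrained problem \eqref{problem} for the objective $\varphi:=f(\bar x,\cdot)\colon\R^p\to\overline{\R}$ and the feasible set $S:=G(\bar x)$, and then to invoke Theorem~\ref{Theorem-1}. Under this identification a point $\bar y\in G(\bar x)$ is an $\varepsilon$-solution of \eqref{Px} exactly when it is an $\varepsilon$-solution of \eqref{problem} for the pair $(\varphi,S)$, and the qualification condition \eqref{RC_1} for $(\varphi,S)$ becomes $\ri(\dom f(\bar x,\cdot))\cap\ri(G(\bar x))\neq\emptyset$, which is the hypothesis of the theorem. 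Properness of $\varphi=f(\bar x,\cdot)$ follows from $\bar y\in G(\bar x)\subseteq\dom f(\bar x,\cdot)$ together with the assumed lower boundedness of $f(\bar x,\cdot)$ on $G(\bar x)$.

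Before Theorem~\ref{Theorem-1} can be applied, one must check that $\varphi=f(\bar x,\cdot)$ and $S=G(\bar x)$ are nearly convex. Here $\epi(f(\bar x,\cdot))$ is the slice $\{(y,\alpha)\mid(\bar x,y,\alpha)\in\epi f\}$ of the nearly convex set $\epi f$, and $G(\bar x)=\{y\mid(\bar x,y)\in\gph G\}$ is the slice at the first coordinate of the nearly convex set $\gph G$. Writing $D\subseteq\epi f\subseteq\overline{D}$ with $D$ convex (and similarly $D'\subseteq\gph G\subseteq\overline{D'}$), the natural convex witnesses for these two slices are the corresponding slices of $D$ and $D'$; the inclusion of the slice of $D$ into the slice of $\epi f$ is immediate, while the reverse inclusion into the closure comes from the classical fact that the half-open segment from a relative-interior point of $D$ lying over $\bar x$ to any closure point of $D$ lying over $\bar x$ remains in $D$ (and likewise for $D'$). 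It is exactly the qualification condition $\ri(\dom f(\bar x,\cdot))\cap\ri(G(\bar x))\neq\emptyset$ that supplies such a relative-interior point over $\bar x$, so the argument closes and $f(\bar x,\cdot)$, $G(\bar x)$ are nearly convex.

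With these facts in hand, I would simply quote Theorem~\ref{Theorem-1}: $\bar y\in G(\bar x)$ is an $\varepsilon$-solution of \eqref{Px} if and only if there exist $\varepsilon_1,\varepsilon_2\ge 0$ with $\varepsilon_1+\varepsilon_2=\varepsilon$ such that
\[
0\in\partial_{\varepsilon_1}\bigl(f(\bar x,\cdot)\bigr)(\bar y)+N_{\varepsilon_2}\bigl(\bar y;G(\bar x)\bigr),
\]
and then rewrite $\partial_{\varepsilon_1}\bigl(f(\bar x,\cdot)\bigr)(\bar y)=\partial^y_{\varepsilon_1}f(\bar x,\bar y)$ to recover \eqref{NS_condition}. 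Equivalently, without citing Theorem~\ref{Theorem-1}, one re-runs its proof: $\bar y$ is an $\varepsilon$-solution of \eqref{Px} iff $0\in\partial_\varepsilon\bigl(f(\bar x,\cdot)+\delta_{G(\bar x)}\bigr)(\bar y)$, and then the sum rule of Theorem~\ref{sum-rule} (applicable thanks to the qualification condition) together with the identity $\partial_{\varepsilon_2}\delta_{G(\bar x)}(\bar y)=N_{\varepsilon_2}(\bar y;G(\bar x))$ yields the claim.

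The step that is not purely mechanical is the verification that the slices $f(\bar x,\cdot)$ and $G(\bar x)$ are nearly convex: a slice of a nearly convex set need not be nearly convex in general, so one genuinely has to exploit the qualification condition, which places $\bar x$ in the relative interior of the relevant projected domains and thereby makes the segment argument work. Once that point is settled, the conclusion is a direct consequence of the material developed in Sections~2 and~3.
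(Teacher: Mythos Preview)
Your approach is essentially identical to the paper's: both reduce \eqref{Px} at the fixed parameter $\bar x$ to the abstract problem \eqref{problem} with $\varphi=f(\bar x,\cdot)$ and $S=G(\bar x)$, and then invoke Theorem~\ref{Theorem-1} (equivalently, the sum rule Theorem~\ref{sum-rule} applied to $f(\bar x,\cdot)+\delta_{G(\bar x)}$). The paper's proof is actually terser than yours---it simply asserts that $G(\bar x)$ is nearly convex and applies Theorem~\ref{Theorem-1} without further comment---whereas you pause to discuss properness and the near convexity of the slices $f(\bar x,\cdot)$ and $G(\bar x)$, a point the paper leaves implicit; your segment argument for this is the right idea, though the step ``the qualification condition supplies a relative-interior point of $D$ lying over $\bar x$'' would need one more line to be fully rigorous (one must pass from $y_0\in\ri(\dom f(\bar x,\cdot))$ to a point of $\ri(\epi f)$, respectively $\ri(\gph G)$, with first coordinate $\bar x$).
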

	
	\begin{proof}
		Let $\bar x \in \mathbb{R}^n$.		Consider the function $\varphi_G(y)= f(\bar x, y) + \delta_{G(\bar x)}( y)$, where $\delta_{G(\bar x)}(\cdot)$ is the indicator function of the nearly convex set $G(\bar x)$. The latter means that $\delta_{G(\bar x)}(y)=0$ for $y\in G(\bar x)$ and $\delta_{G(\bar x)}(y)=\infty$ for $y\notin G(\bar x)$. We now apply Theorem~\ref{Theorem-1} for the case where $ f(\bar x, \cdot)$ and $G(\bar x)$ play the corresponding the role of $\varphi$ and $S$. Consequently, we obtain~\eqref{NS_condition}.
	\end{proof}


	\section{Sensitivity Analysis}
	We now present some new results on sensitivity analysis of nearly convex optimization problems under inclusion constraints. By using the sum rules and appropriate qualification conditions, we will obtain formulas for computing the $\varepsilon$-subdifferential of the optimal value functions in both parametric unconstrained and constrained nearly convex optimization problems.
	\subsection{Unconstrained nearly convex optimization problem}
	
	Consider the \textit{parametric unconstrained nearly convex optimization problem}
	\begin{align}\label{math_pro.}
		\min\{f(x,y)\mid  y \in \mathbb{R}^p\}
	\end{align}
	depending on the parameter $x$, where $f: \mathbb{R}^n \times \mathbb{R}^p  \to \overline{\mathbb R}$. The function $f$ is called the \textit{objective function} of~\eqref{math_pro.}. The \textit{optimal value function}
	$m: \mathbb{R}^n \rightarrow \overline{\R}$ of \eqref{math_pro.} is
	\begin{align}\label{marg.func.}
		m(x):= \inf \left\{f (x,y)\mid  y \in \mathbb{R}^p\right\}.
	\end{align}
	The \textit{solution set} of \eqref{math_pro.} with respect to a given parameter $\bar x\in\R^n$ is defined by
$$S(\bar x):=\{y \in \mathbb{R}^p\mid m(\bar x)= f (\bar x,y)\}.$$
For $\eta > 0$, one calls $S_\eta(\bar x):= \{ y \in \mathbb{R}^p \mid  f (\bar x, y) \le m (\bar x)+\eta \}$
	the \textit{approximate solution set} of~\eqref{math_pro.}.
	
	\medskip
	
	Under the near convexity of the objective function $f$, one has the optimal value function $m$ is nearly convex. This result is a direct consequence of Theorem 5.2 in \cite{Huy_Nam_Yen_23}.
	\begin{proposition}
		Let $f: \mathbb{R}^n \times \mathbb{R}^p  \to \overline{\mathbb R}$ be a proper nearly convex function. Then, the optimal value function $m$ given in~\eqref{marg.func.} is nearly convex.
	\end{proposition}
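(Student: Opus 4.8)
The plan is to prove directly that $\epi m$ is a nearly convex subset of $\R^n\times\R$, which by definition is exactly what ``$m$ is nearly convex'' means (and which also recovers the statement as a special case of \cite[Theorem 5.2]{Huy_Nam_Yen_23}). The guiding idea is the classical one for infimal projections in convex analysis: the epigraph of $m$ is, up to taking a closure, the image of $\epi f$ under the coordinate projection that forgets the $y$-variable, and both convexity and near convexity pass through such a linear map.

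Concretely, I would start by using the near convexity of $f$ to fix a convex set $E\subset\R^n\times\R^p\times\R$ with $E\subset\epi f\subset\overline E$, and introduce the linear (hence continuous) projection $\pi\colon\R^n\times\R^p\times\R\to\R^n\times\R$, $\pi(x,y,\alpha)=(x,\alpha)$. The first real step is the ``sandwich''
\[
\pi(\epi f)\ \subset\ \epi m\ \subset\ \overline{\pi(\epi f)}.
\]
The left inclusion is immediate from the definition of $m$: if $f(x,y)\le\alpha$ then $m(x)\le\alpha$. For the right inclusion I would take $(x,\alpha)\in\epi m$, so $m(x)\le\alpha$, and for each $k\in\N$ choose $y_k$ with $f(x,y_k)<\alpha+1/k$; then $(x,\alpha+1/k)\in\pi(\epi f)$ for every $k$, and letting $k\to\infty$ gives $(x,\alpha)\in\overline{\pi(\epi f)}$.

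Next I would feed $E\subset\epi f\subset\overline E$ through $\pi$. Applying $\pi$ preserves inclusions, the continuity of $\pi$ yields $\pi(\overline E)\subset\overline{\pi(E)}$, and the closure operator is monotone and idempotent; chaining these with the sandwich above gives
\[
\pi(E)\ \subset\ \pi(\epi f)\ \subset\ \epi m\ \subset\ \overline{\pi(\epi f)}\ \subset\ \overline{\pi(\overline E)}\ \subset\ \overline{\pi(E)}.
\]
Since $\pi$ is linear, $\pi(E)$ is convex, so the display exhibits $\epi m$ between a convex set and its closure, i.e. $\epi m$ is nearly convex, as desired.

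I expect the only delicate point to be the right-hand inclusion of the sandwich: because the infimum defining $m(x)$ need not be attained, $\epi m$ can be strictly larger than the linear image $\pi(\epi f)$, so one genuinely needs the closure there — and this is precisely why near convexity, rather than plain convexity, is the correct conclusion even when $f$ itself happens to be convex with non-closed domain. All remaining verifications (linearity and continuity of $\pi$, convexity of $\pi(E)$, elementary properties of the closure operator) are routine.
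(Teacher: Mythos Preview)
Your argument is correct: the projection sandwich $\pi(E)\subset\epi m\subset\overline{\pi(E)}$ is established exactly as you outline, and it yields the near convexity of $\epi m$ directly. The only step worth double-checking is the right-hand inclusion of the first sandwich when $m(\bar x)=-\infty$, but your approximating sequence $y_k$ works verbatim in that case as well, since $\inf_y f(\bar x,y)<\alpha+1/k$ still guarantees the existence of such $y_k$.

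The paper does not give its own proof here: it simply records the proposition as a direct consequence of \cite[Theorem~5.2]{Huy_Nam_Yen_23}, which treats the constrained optimal value function $m(x)=\inf\{f(x,y)\mid y\in G(x)\}$ under the qualification $\ri(\dom f)\cap\ri(\gph G)\neq\emptyset$; the unconstrained case is the specialization $G\equiv\R^p$, for which that qualification is automatic since $f$ is proper. Your proof is thus a self-contained, elementary replacement for that citation, and in fact it is essentially the standard infimal-projection argument that one would expect to find behind the cited theorem. What you gain is transparency and independence from the external reference; what the paper's route gains is brevity and a single unified statement covering both the constrained and unconstrained cases.
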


	We now obtain formulas for the $\varepsilon$-subdifferential of $m$. In the convex setting, the following result can be found somewhere, for example, it was given in  \cite[Corollary~5]{MoussaouiSeeger1994} as a consequence of a more general result. The interested readers also see this result in \cite[Theorem~2.6.2]{Zalinescu_2002} and in~\cite[Theorem 4.1]{An_Yao_19}. We now present a detailed, direct proof in the case of nearly convex case. Our arguments are based on a proof scheme of \cite{MoussaouiSeeger1994}.	
	
	\begin{theorem}\label{sensitivity1} Let $f:  \mathbb{R}^n \times \mathbb{R}^p   \to \overline{\R}$ be a proper nearly convex function and $m$ be finite at $\bar x \in \mathbb{R}^n.$ Then, for every $\varepsilon \ge 0$,
		\begin{equation}\label{Epsilon_subdifferential2}
			\begin{array}{rcl}\partial_\varepsilon m(\bar x) &=& \bigcap\limits_{\eta\; >\;0} \ \bigcap\limits_{y \,\in\, S_\eta (\bar x)} \bigg\{\xi \in \mathbb{R}^n \mid  (\xi,0) \in \partial_{\varepsilon +\eta} f (\bar x, y) \bigg\}\\
				&=&\bigcap\limits_{\eta\; >\;0} \ \bigcup\limits_{y \,\in\, \mathbb{R}^p} \bigg\{\xi \in \mathbb{R}^n \mid  (\xi,0) \in \partial_{\varepsilon +\eta} f (\bar x, y) \bigg\}. 
			\end{array}
		\end{equation}
		In particular, 
		\begin{equation}\label{subdifferential}
			\begin{array}{rcl}	\partial m(\bar x) &=& \bigcap\limits_{\eta\; >\;0} \ \bigcap\limits_{y \,\in\, S_\eta (\bar x)} \bigg\{\xi \in \mathbb{R}^n \mid (\xi,0) \in \partial_{\eta} f (\bar x, y) \bigg\}\\
				&=&\bigcap\limits_{\eta\; >\;0} \ \bigcup\limits_{y \,\in\, \mathbb{R}^p} \bigg\{\xi \in \mathbb{R}^n \mid  (\xi,0) \in \partial_{\eta} f (\bar x, y) \bigg\}.
			\end{array}
		\end{equation}
		In addition, if $S(\bar x)\not= \emptyset$, then for every $\varepsilon \ge 0$, one has
		\begin{align}\label{nonconstraint}
			\partial_\varepsilon m(\bar x) = \big\{\xi \in \mathbb{R}^n \mid  (\xi,0) \in \partial_{\varepsilon} f (\bar x, y) \big\},
		\end{align}
		for all $y \in S(\bar x).$
	\end{theorem}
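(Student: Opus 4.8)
The plan is to reduce each of the three claimed identities to a single elementary equivalence, and then to do careful bookkeeping with the defining inequalities. The equivalence I would record first is: $\xi\in\partial_\varepsilon m(\bar x)$ if and only if
\begin{equation*}
\langle \xi,x-\bar x\rangle-\varepsilon\le f(x,y)-m(\bar x)\qquad\text{for all }(x,y)\in\R^n\times\R^p.
\end{equation*}
This follows immediately from the definition of the $\varepsilon$-subdifferential and the relation $m(x)=\inf_{y}f(x,y)$: the forward implication uses $m(x)\le f(x,y)$, and the backward one is obtained by taking the infimum over $y$, the left-hand side being independent of $y$. I would also note that, since $m(\bar x)$ is finite, the approximate solution set $S_\eta(\bar x)$ is nonempty for every $\eta>0$; this will be used repeatedly. (No near convexity is needed for any of this --- it is purely the infimal structure of $m$ that is at work.)

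To prove~\eqref{Epsilon_subdifferential2} I would establish the chain of inclusions
\begin{align*}
\partial_\varepsilon m(\bar x)\ &\subseteq\ \bigcap_{\eta>0}\bigcap_{y\in S_\eta(\bar x)}\big\{\xi\mid(\xi,0)\in\partial_{\varepsilon+\eta}f(\bar x,y)\big\}\\
&\subseteq\ \bigcap_{\eta>0}\bigcup_{y\in\R^p}\big\{\xi\mid(\xi,0)\in\partial_{\varepsilon+\eta}f(\bar x,y)\big\}\ \subseteq\ \partial_\varepsilon m(\bar x),
\end{align*}
so that the three sets coincide. For the first inclusion, take $\xi\in\partial_\varepsilon m(\bar x)$, $\eta>0$, and $y\in S_\eta(\bar x)$, so that $m(\bar x)\le f(\bar x,y)\le m(\bar x)+\eta$; then for every $(x,y')$ one has $\langle \xi,x-\bar x\rangle-\varepsilon\le f(x,y')-m(\bar x)\le f(x,y')-f(\bar x,y)+\eta$, which means $(\xi,0)\in\partial_{\varepsilon+\eta}f(\bar x,y)$. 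The middle inclusion is immediate, because the nonempty set $S_\eta(\bar x)$ provides a witness $y$. For the last inclusion, suppose $\xi$ lies in the middle set; for each $\eta>0$ choose $y_\eta$ with $(\xi,0)\in\partial_{\varepsilon+\eta}f(\bar x,y_\eta)$ and use $f(\bar x,y_\eta)\ge m(\bar x)$ to get $\langle \xi,x-\bar x\rangle-(\varepsilon+\eta)\le f(x,y)-f(\bar x,y_\eta)\le f(x,y)-m(\bar x)$ for all $(x,y)$; letting $\eta\downarrow0$ and applying the equivalence above yields $\xi\in\partial_\varepsilon m(\bar x)$.

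The remaining two formulas are then short. Formula~\eqref{subdifferential} is just~\eqref{Epsilon_subdifferential2} specialized to $\varepsilon=0$, using $\partial_{0+\eta}f=\partial_\eta f$. For~\eqref{nonconstraint}, assume $S(\bar x)\ne\emptyset$ and fix any $y_0\in S(\bar x)$, so $f(\bar x,y_0)=m(\bar x)$; unfolding the definition, $(\xi,0)\in\partial_\varepsilon f(\bar x,y_0)$ says exactly that $\langle \xi,x-\bar x\rangle-\varepsilon\le f(x,y)-f(\bar x,y_0)=f(x,y)-m(\bar x)$ for all $(x,y)$, which by the first paragraph is precisely $\xi\in\partial_\varepsilon m(\bar x)$. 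Since $y_0$ was an arbitrary point of $S(\bar x)$, this is~\eqref{nonconstraint}.

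I do not anticipate a genuine obstacle; the argument is a disciplined translation between the defining inequalities of $\partial_\varepsilon m$ and those of $\partial f$. The one delicate point is the $\varepsilon+\eta$ shift appearing in the last inclusion of the chain: an $\eta$-approximate solution only pins down $f(\bar x,y_\eta)$ to within an additive $\eta$, so that slack must be absorbed into the $\varepsilon$-level of $\partial f$ and then eliminated by passing to the limit $\eta\downarrow0$. Everything else, including the role of the nonemptiness of $S_\eta(\bar x)$ in licensing the middle inclusion and the selection of $y_\eta$, is routine.
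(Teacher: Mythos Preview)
Your proof is correct and follows the same overall architecture as the paper: establish the chain $\partial_\varepsilon m(\bar x)\subseteq \bigcap_\eta\bigcap_{y\in S_\eta(\bar x)}\{\cdots\}\subseteq\bigcap_\eta\bigcup_{y\in\R^p}\{\cdots\}\subseteq\partial_\varepsilon m(\bar x)$, using nonemptiness of $S_\eta(\bar x)$ for the middle step and a limit $\eta\downarrow 0$ for the last. The difference is one of packaging: the paper phrases the two nontrivial inclusions through the conjugate-function characterization $\xi\in\partial_\varepsilon m(\bar x)\Leftrightarrow m(\bar x)+m^*(\xi)\le\langle\xi,\bar x\rangle+\varepsilon$ together with the identity $m^*(\xi)=f^*(\xi,0)$, whereas you work directly with the defining inequalities and the single equivalence $\xi\in\partial_\varepsilon m(\bar x)\Leftrightarrow \langle\xi,x-\bar x\rangle-\varepsilon\le f(x,y)-m(\bar x)$ for all $(x,y)$. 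Your formulation is slightly more elementary (no conjugates needed) and makes transparent the remark that near convexity plays no role; the paper's version has the advantage of exposing the identity $m^*(\xi)=f^*(\xi,0)$, which is of independent interest. You also supply a proof of~\eqref{nonconstraint}, which the paper states but does not argue in its proof.
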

	\begin{proof}
		Set
		\begin{align*}
			&\mathcal{A}_\eta (\bar x)= \bigcap\limits_{y \,\in\, S_\eta (\bar x)} \bigg\{\xi \in \mathbb{R}^n \mid (\xi,0) \in \partial _{\varepsilon +\eta} f (\bar x, y)
			\bigg\},\\
			&\mathcal{B}_\eta (\bar x)= \bigcup\limits_{y \,\in\, \mathbb{R}^p} \bigg\{\xi \in \mathbb{R}^p \mid (\xi,0) \in \partial _{\varepsilon +\eta} f (\bar x, y)
			\bigg\}.
		\end{align*}
		So~\eqref{Epsilon_subdifferential2} means 
		$$\partial_\varepsilon m(\bar x) =  \bigcap\limits_{\eta\,>\,0}\mathcal{A}_\eta (\bar x)= \bigcap\limits_{\eta\,>\,0} \mathcal{B}_\eta (\bar x).$$
		We first observe that the set $S_\eta(\bar x)$ is nonempty for every $\eta >0$ as $m(\bar x)=\inf\limits_{y\, \in\, \mathbb{R}^p} \varphi (\bar x, y)$ by~\eqref{marg.func.}. Thus, it is obvious $\mathcal{A}_\eta (\bar x)\subset \mathcal{B}_\eta (\bar x)$ for all $\eta >0$, and hence $\bigcap\limits_{\eta>0}\mathcal{A}_\eta (\bar x)\subset \bigcap\limits_{\eta>0} \mathcal{B}_\eta (\bar x)$. So, the equalities in \eqref{Epsilon_subdifferential2} will be proved, if we can clarify that
		\begin{equation}
			\label{ct1}
			\partial_\varepsilon m(\bar x) \subset \bigcap\limits_{\eta\,>\,0}\mathcal{A}_\eta (\bar x)\end{equation} and
		\begin{equation}\label{ct1n}
			\bigcap\limits_{\eta\,>\,0} \mathcal{B}_\eta (\bar x) \subset \partial_\varepsilon m(\bar x).
		\end{equation}
		To show \eqref{ct1}, we now take any $\xi \in \partial_\varepsilon m(\bar x)$, $\eta>0$, and $y \in S_\eta(\bar x)$. By the definition of the conjugate function, we observe that $\xi \in \partial_\varepsilon m(\bar x)$ if and only if
		\begin{align}
			\label{ct2n}
			m(\bar x) + m^*(\xi) \le \langle \xi, \bar x \rangle +\varepsilon.
		\end{align}
		Adding $\eta>0$ to both sides of \eqref{ct2n} gives
		\begin{align}
			\label{ct3}
			m(\bar x) + m^*(\xi) + \eta\le \langle \xi, \bar x \rangle+\varepsilon +\eta.
		\end{align}
		Since $y \in S_\eta (\bar x)$, one has $f(\bar x, y) \le m(\bar x)+\eta$. So, \eqref{ct3} yields
		\begin{align}
			\label{ct4}
			f(\bar x,y) + m^*(\xi) \le \langle \xi, \bar x \rangle +\varepsilon +\eta.
		\end{align}
		For every $\zeta \in \mathbb{R}^n$, by the definition of conjugate function, we have
		\begin{align*}
			m^*(\zeta)&= \sup\limits_{x \,\in\, \mathbb{R}^n} \big \{ \langle \zeta, x \rangle - m (x)\big \}\\ & = \sup\limits_{x \in \mathbb{R}^n} \big \{ \langle \zeta, x \rangle - \inf\limits_{y \,\in\, \mathbb{R}^p} f (x,y)\big \}\\
			&= \sup\limits_{(x,y) \,\in\, \mathbb{R}^n\times \mathbb{R}^p} \big \{ \langle \zeta, x \rangle - f (x,y)\big \}\\
			&= \sup\limits_{(x,y) \,\in \,\mathbb{R}^n\times \mathbb{R}^p} \big \{ \langle (\zeta,0), (x,y) \rangle - f (x,y)\big \}\\
			&= f^*(\zeta,0).
		\end{align*}
		Replacing $m^*(x^*)=f^*(\zeta,0)$	into \eqref{ct4}, one obtains
		\begin{align}
			\label{ct5}
			f(\bar x,y) + f^*(\xi,0) \le \langle \xi, \bar x \rangle +\varepsilon +\eta.
		\end{align}
		Consequently,  inequality~\eqref{ct5} yields $(\xi,0)\in \partial_{\varepsilon +\eta} f (\bar x, y)$ for all $\eta>0$ and $y \in S_\eta(\bar x).$ In the other words $\xi \in \bigcap\limits_{\eta\,>\,0}\mathcal{A}_\eta (\bar x)$, so \eqref{ct1} is valid.
		
		Next, to prove \eqref{ct1n}, take any $x^* \in \bigcap\limits_{\eta>0}\mathcal{B}_\eta (\bar x)$. Then, for every $\eta>0$, one can find $y \in \mathbb{R}^p$ such that $(\xi,0)\in \partial_{\varepsilon +\eta} \varphi(\bar x, y)$. The latter means that
		$$
		f^*(\xi,0) +f (\bar x,y) - \langle (\xi,0), (\bar x, y) \rangle \le \varepsilon +\eta,
		$$
		or, equivalently,
		\begin{align}\label{ct6}
			f^*(\xi,0) +f (\bar x,y) - \langle \xi,\bar x \rangle \le \varepsilon +\eta.
		\end{align}
		Since $ f^*(\xi,0)=m^*(\xi)$ and $ m(\bar x) \le \varphi (\bar x, y)$, \eqref{ct6} implies
		\begin{align}\label{ct7}
			m^*(\xi) +m (\bar x) - \langle \xi, \bar x \rangle \le \varepsilon +\eta.
		\end{align}  
		As \eqref{ct7} holds for every $\eta >0$, letting $\eta \to 0$ yields
		$$ m^*(\xi) +m (\bar x) - \langle \xi, \bar x \rangle \le \varepsilon.$$
		The last inequality gives $\xi  \in \partial_\varepsilon m(\bar x)$. Therefore, \eqref{ct1n} is satisfied.
		
		Combining \eqref{ct1} and \eqref{ct1n} yields \eqref{Epsilon_subdifferential2}. For $\varepsilon=0$, from \eqref{Epsilon_subdifferential2} one obtains~\eqref{subdifferential}.
	\end{proof}
	
	Here is a simple example designed to illustrate Theorem~\ref{sensitivity1}.
	\begin{example} \rm  Consider
 a function $f: \R^2 \to \overline{\R}$ given by
		\[ f(x,y) = \begin{cases} 
			x^2+y^2 & \text{if } (x,y)\in [-1,1]\times[-1,1]\setminus \left(\tfrac{1}{2},1\right),\\
			\infty & \text{otherwise}.\\
		\end{cases}
		\]
		
		Then the optimal value function~\eqref{marg.func.} of problem~\eqref{math_pro.} is
		$$ m(x):= \inf \left\{ f(x,y)\mid  y \in \Bbb {R}\right\} =x^2. $$	
		Let $\bar x=0$. For every $\varepsilon\ge 0$, one has 
		\begin{align*}
			\partial_{\varepsilon}m(\bar x)&=\left\{ x^*\in\Bbb {R} \mid x^*x \le x^2+\varepsilon, \forall x\in\Bbb{R}\right \}\\
			&=\left\{ x^*\in\Bbb {R} \mid  -x^2+x^*x-\varepsilon\le 0 , \forall x\in\Bbb{R} \right\}\\
			&=[-2\sqrt{\varepsilon},2\sqrt{\varepsilon}].
		\end{align*}
		In this case, $\bar y=0 \in S(\bar x)$. Thus we will show~\eqref{nonconstraint} is valid.  It is easy to see that,  for any function $\varphi\colon \R\times \R \to \R$ of the form $\varphi(x,y)=\varphi_1(x) + \varphi_2(y)$ for all $(x, y)\in \R\times \R$, one has
		$\partial_{\varepsilon} \varphi(x,y)\ \subseteq\ \partial_{\varepsilon}\varphi_1(x)\times \partial_{\varepsilon}\varphi_2(y)$.
		We now apply this for the function $f$ to obtain
		$\partial_{\varepsilon} f(\bar x,\bar y)\ {\subseteq}\		[-2\sqrt{\varepsilon},2\sqrt{\varepsilon}]\times [-2\sqrt{\varepsilon},2\sqrt{\varepsilon}]$  and so
		\begin{align*}
			RHS_{\eqref{nonconstraint}} &=\left\lbrace x^*\in \mathbb{R} \mid (x^*,0)\in \partial_{\varepsilon}f(\bar{x},y) \right\rbrace \\
			&=\{ x^*\in \mathbb{R} \mid (x^*,0)\in \partial_{\varepsilon}f(\bar{x},\bar y) \} \\
			&=\{ x^*\in \mathbb{R} \mid (x^*,0)\in [-2\sqrt{\varepsilon},2\sqrt{\varepsilon}]\times[-2\sqrt{\varepsilon},2\sqrt{\varepsilon}]\} \\
			&=[-2\sqrt{\varepsilon},2\sqrt{\varepsilon}].
		\end{align*}
		Therefore, the conclusion of Theorem~\ref{sensitivity1} is justified.
	\end{example}
	\subsection{Constrained nearly convex optimization problems}
	In this subsection, we will extend the result of the previous subsection for constrained nearly convex optimization problems by using some suitable qualification conditions.

	Let $f: \mathbb{R}^n \times \mathbb{R}^p \rightarrow \overline{\R}$ be an extended real-valued function, and $G: \mathbb{R}^n  \rightrightarrows \mathbb{R}^p$ be a set-valued mapping.	Consider the \textit{parametric optimization problem under an inclusion constraint}
	\begin{align}\label{math_program1}
		\min\{f(x,y)\mid y \in G(x)\}
	\end{align}
	depending on the parameter $x$. The \textit{optimal value function}
	$m: \mathbb{R}^n  \rightarrow \overline{\R}$ of \eqref{math_program1} is
	\begin{align}\label{marginalfunction1}
		m(x):= \inf \left\{f (x,y)\mid y \in G(x)\right\}.
	\end{align}
	The usual convention $\inf \emptyset =\infty$ forces $m(x)=\infty$ for every $x \notin {\rm{dom}}\, G.$ 
	The \textit{solution map}  $S: \mathbb{R}^n  \rightrightarrows \mathbb{R}^p $ of \eqref{math_program1} is defined by
	\begin{align*}
		S(x)=\{y \in G(x)\mid  m(x)= f (x,y)\}.
	\end{align*}
	The \textit{approximate solution set} of \eqref{math_program1} is given by 
	\begin{align}\label{solutionset}
		S_\eta(\bar x)= \{ y \in G(\bar x) \mid f (\bar x, y) \le m (\bar x)+\eta \},\ \, \forall \eta > 0.
	\end{align} 
	
	The following result on the near convexity of the optimal value function is proved in~\cite[Theorem 5.2]{Huy_Nam_Yen_23}.
	
	\begin{proposition}\label{Pro5.2}
		Suppose that $f: \mathbb{R}^n \times \mathbb{R}^p \to \overline{\R}$  is a proper nearly convex function, and $G: \mathbb{R}^n \rightrightarrows \mathbb{R}^p$
		is a nearly convex set-valued mapping. Suppose furthermore that
		\begin{align*}
			\ri (\dom f) \cap \ri (\gph G) \not= \emptyset.
		\end{align*}
		Then the optimal value function $m$ given in~\eqref{marginalfunction1} is nearly convex.
	\end{proposition}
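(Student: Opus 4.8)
The plan is to reduce the constrained problem to the unconstrained parametric problem treated above (whose optimal value function is nearly convex by \cite[Theorem~5.2]{Huy_Nam_Yen_23}) by means of the indicator-function trick. Define $\tilde f\colon\R^n\times\R^p\to\overline\R$ by $\tilde f(x,y):=f(x,y)+\delta_{\gph G}(x,y)$. Since $\delta_{\gph G}(x,y)=0$ when $y\in G(x)$ and $\delta_{\gph G}(x,y)=\infty$ otherwise, one has $m(x)=\inf\{\tilde f(x,y)\mid y\in\R^p\}$ for every $x\in\R^n$, so that $m$ is exactly the optimal value function of the unconstrained problem associated with $\tilde f$.

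The key step is to check that $\tilde f$ is nearly convex. By assumption $f$ is nearly convex, and since $\gph G$ is a nearly convex set, its indicator $\delta_{\gph G}$ is nearly convex as well (recall $\epi\delta_{\gph G}=\gph G\times[0,\infty)$). As $\dom\delta_{\gph G}=\gph G$, the standing hypothesis $\ri(\dom f)\cap\ri(\gph G)\neq\emptyset$ is precisely the qualification condition needed to apply the near-convex sum rule \cite[Corollary~4.3]{Nam_Thieu_Yen_23}; it yields that $\tilde f=f+\delta_{\gph G}$ is nearly convex (and proper, since $\dom\tilde f=\dom f\cap\gph G\neq\emptyset$). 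Invoking the unconstrained result \cite[Theorem~5.2]{Huy_Nam_Yen_23} with $\tilde f$ in place of $f$ then gives the near convexity of $m$.

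For a self-contained treatment of this last implication one argues directly with epigraphs. Fix a convex set $D$ with $D\subseteq\epi\tilde f\subseteq\overline D$, and let $P\colon\R^n\times\R^p\times\R\to\R^n\times\R$, $P(x,y,\alpha):=(x,\alpha)$, be the (continuous, linear) projection. Then $P(D)$ is convex, $P(D)\subseteq P(\epi\tilde f)\subseteq\epi m$, and, since each value $m(x)$ is an infimum, $\epi m\subseteq\overline{P(\epi\tilde f)}\subseteq\overline{P(\overline D)}\subseteq\overline{P(D)}$, the last inclusion by continuity of $P$. Hence $P(D)\subseteq\epi m\subseteq\overline{P(D)}$, so $\epi m$ is nearly convex, i.e., $m$ is nearly convex. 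The only point requiring genuine care is the sum-rule invocation: it is valid exactly because of the relative-interior qualification condition, which is where the hypothesis $\ri(\dom f)\cap\ri(\gph G)\neq\emptyset$ enters; the remaining work with projections and closures is routine.
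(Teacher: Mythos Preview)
The paper does not provide its own proof of this proposition; it merely records that the result ``is proved in~\cite[Theorem~5.2]{Huy_Nam_Yen_23}.'' Your argument is correct and in fact supplies the details the paper omits: the indicator-function reduction $\tilde f=f+\delta_{\gph G}$ together with the near-convex sum rule \cite[Corollary~4.3]{Nam_Thieu_Yen_23} is exactly the right device for exploiting the relative-interior hypothesis, and the projection-and-closure argument on epigraphs is a clean, standard way to pass from near convexity of $\epi\tilde f$ to near convexity of $\epi m$.

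One minor remark: the sentence ``Invoking the unconstrained result \cite[Theorem~5.2]{Huy_Nam_Yen_23}'' is circular as written, since Theorem~5.2 in that reference \emph{is} the constrained statement you are proving (the unconstrained proposition in Section~4.1 of the present paper is described only as a ``direct consequence'' of it). This does no harm, however, because you immediately follow with the self-contained epigraph argument, which stands on its own; you could simply drop the citation there.
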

	
	\medskip	
	
	For any $\varepsilon\ge 0$ and $ \eta \ge 0$, define by  $\varGamma(\eta+\varepsilon)$ the set $$\varGamma(\eta+\varepsilon)=\{(\gamma_1, \gamma_2) \mid \gamma_1 \ge 0,\ \gamma_2 \ge 0, \ \gamma_1+\gamma_2=\eta+\varepsilon\}.$$
	
	We are now in a position to formulate the main result of this subsection.
	\begin{theorem}
		\label{thm_sensitivity_1} Suppose that $f: \mathbb{R}^n \times \mathbb{R}^p \rightarrow \overline{\R}$ is a proper nearly convex function, and $G: \R^n \rightrightarrows \R^p$ is a nearly convex set-valued mapping. Assume that the optimal value function $m$ in \eqref{marginalfunction1} is finite at $\bar x \in \mathbb{R}^n.$ If the following qualification condition
		\begin{align}\label{RC}
			\ri(\dom f ) \cap \ri(\gph G ) \not=\emptyset
		\end{align}
		holds. Then, for every $\varepsilon \ge 0$, we have
		\begin{equation*}
			\begin{split}
				\partial _\varepsilon m(\bar x)&=
				\bigcap\limits_{\eta >0} 
				\bigcap\limits_{ y \,\in\, S_\eta (\bar x)}
				\bigcup_{(\gamma_1, \gamma_2)\, \in \, \varGamma(\eta+\varepsilon)}
				\bigg\{\xi\in \mathbb{R}^n \mid (\xi,0) \in \partial_{\gamma_1}f(\bar x,y) \!+\!N_{\gamma_2}\big ((\bar x,  y); {\rm gph}\, G \big ) \bigg\}\\
				&=\bigcap\limits_{\eta >0} 
				\ \bigcup\limits_{y\, \in \, \mathbb{R}^p}\
				\bigcup_{(\gamma_1, \gamma_2) \,\in\, \varGamma(\eta+\varepsilon)}
				\bigg\{\xi \in \mathbb{R}^n  \mid (\xi,0) \in \partial_{\gamma_1}f(\bar x, y) +N_{\gamma_2}\big ((\bar x, y); {\rm gph}\, G \big ) \bigg\},
			\end{split}
		\end{equation*}
		where $S_\eta (\bar x)$ is given in \eqref{solutionset}.
	\end{theorem}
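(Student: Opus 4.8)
The plan is to reduce the constrained problem to the unconstrained one of Theorem~\ref{sensitivity1} by absorbing the inclusion constraint into the objective via the indicator function. Set $\tilde f(x,y):=f(x,y)+\delta_{\gph G}(x,y)$. Then $\tilde f$ is a proper function, and its optimal value function $\tilde m(x)=\inf_{y\in\R^p}\tilde f(x,y)$ coincides with the constrained optimal value function $m$ in~\eqref{marginalfunction1}, and the approximate solution sets $S_\eta(\bar x)$ agree. The near convexity of $\tilde f$ follows from~\cite[Corollary 4.3]{Nam_Thieu_Yen_23} together with the observation that $\dom\tilde f=\dom f\cap\gph G$ and that $\delta_{\gph G}$ is nearly convex since $\gph G$ is; moreover $\ri(\dom f)\cap\ri(\dom\delta_{\gph G})=\ri(\dom f)\cap\ri(\gph G)\neq\emptyset$ by~\eqref{RC}, which is exactly the qualification condition~\eqref{QC_sum_rule} needed to invoke Theorem~\ref{sum-rule} for the pair $f,\delta_{\gph G}$.

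First I would apply Theorem~\ref{sensitivity1} to $\tilde f$ at $\bar x$ (noting $m=\tilde m$ is finite at $\bar x$), obtaining
\[
\partial_\varepsilon m(\bar x)=\bigcap_{\eta>0}\bigcap_{y\in S_\eta(\bar x)}\bigl\{\xi\in\R^n\mid(\xi,0)\in\partial_{\varepsilon+\eta}\tilde f(\bar x,y)\bigr\}
=\bigcap_{\eta>0}\bigcup_{y\in\R^p}\bigl\{\xi\in\R^n\mid(\xi,0)\in\partial_{\varepsilon+\eta}\tilde f(\bar x,y)\bigr\}.
\]
The next step is to expand $\partial_{\varepsilon+\eta}\tilde f(\bar x,y)$ using the sum rule. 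Here one must be slightly careful: Theorem~\ref{sum-rule} is stated for functions on $\R^n$, so I would either apply it on the product space $\R^n\times\R^p$ to the functions $(x,y)\mapsto f(x,y)$ and $(x,y)\mapsto\delta_{\gph G}(x,y)$, whose domains are $\dom f$ and $\gph G$ with relative interiors meeting by~\eqref{RC}, and then read off the $(\xi,0)$-slice. This yields, for each $\gamma:=\varepsilon+\eta$,
\[
\partial_{\gamma}\tilde f(\bar x,y)=\bigcup_{(\gamma_1,\gamma_2)\in\varGamma(\eta+\varepsilon)}\bigl[\partial_{\gamma_1}f(\bar x,y)+\partial_{\gamma_2}\delta_{\gph G}(\bar x,y)\bigr]
=\bigcup_{(\gamma_1,\gamma_2)\in\varGamma(\eta+\varepsilon)}\bigl[\partial_{\gamma_1}f(\bar x,y)+N_{\gamma_2}\bigl((\bar x,y);\gph G\bigr)\bigr],
\]
using the identity $\partial_{\gamma_2}\delta_\Omega=N_{\gamma_2}(\cdot;\Omega)$ recorded after Definition 2.3. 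Substituting this into the two expressions for $\partial_\varepsilon m(\bar x)$ and restricting to vectors of the form $(\xi,0)$ gives precisely the two claimed formulas.

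The main obstacle I anticipate is bookkeeping the slicing of the product-space $\varepsilon$-subdifferential: one has to verify that $(\xi,0)\in\partial_{\gamma_1}f(\bar x,y)+N_{\gamma_2}((\bar x,y);\gph G)$ if and only if there is a decomposition $(\xi,0)=(\xi_1,u_1)+(\xi_2,u_2)$ with $(\xi_1,u_1)\in\partial_{\gamma_1}f(\bar x,y)$, $(\xi_2,u_2)\in N_{\gamma_2}((\bar x,y);\gph G)$ and $u_1+u_2=0$ — this is automatic from the set-sum, so in fact no extra work is needed there, but care is required to confirm that the union over $(\gamma_1,\gamma_2)$ commutes correctly with the outer intersections over $\eta$ and over (or union over) $y$. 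The inclusion in which the intersection-union structure could in principle fail is benign here because the indexing set $\varGamma(\eta+\varepsilon)$ depends only on $\eta$ (and $\varepsilon$), not on $y$; I would spell this out to make the passage from Theorem~\ref{sensitivity1} rigorous. The remaining verifications — that $S_\eta(\bar x)\neq\emptyset$ for all $\eta>0$ since $m(\bar x)$ is finite, and that the ``$\mathcal A_\eta\subseteq\mathcal B_\eta$'' chain is preserved — are routine and follow the pattern already used in the proof of Theorem~\ref{sensitivity1}.
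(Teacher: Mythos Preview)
Your proposal is correct and follows essentially the same approach as the paper's proof: reduce to the unconstrained case by setting $\tilde f=f+\delta_{\gph G}$, apply Theorem~\ref{sensitivity1} to $\tilde f$, and then expand $\partial_{\varepsilon+\eta}\tilde f(\bar x,y)$ via the sum rule Theorem~\ref{sum-rule} under the qualification condition~\eqref{RC}, using $\partial_{\gamma_2}\delta_{\gph G}=N_{\gamma_2}(\cdot;\gph G)$. The bookkeeping concerns you raise about commuting the union over $(\gamma_1,\gamma_2)$ with the outer intersections are in fact non-issues, since you are substituting an \emph{equality} of sets inside the set-builder, so no interchange argument is required.
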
	
	\begin{proof}
		Since the qualification condition~\ref{RC} holds, it follows that the optimal value function $m$ is nearly convex by Proposition~\ref{Pro5.2}.
		We apply Theorem \ref{sensitivity1} to the case where $f (x,y)$ plays the role of $\big (f +\delta_{\gph G} (\cdot)\big )(x,y)$, where $\delta_{\gph G}(\cdot)$ is the indicator function of $\gph G$. Thus
		\begin{equation}\label{N_Formula_1}
			\begin{split}
				\partial_\varepsilon m (\bar x)& = \bigcap\limits_{\eta >0} 
				\ \bigcap\limits_{ y \,\in\, S_\eta (\bar x)}\bigg\{\xi \in \mathbb{R}^n \mid (\xi,0) \in \partial_{\varepsilon +\eta} \big (f +\delta_{\gph G} (\cdot)\big ) (\bar x,  y) \bigg\}\\
				&	=\bigcap\limits_{\eta >0} \ \bigcup\limits_{  y\, \in \, \mathbb{R}^p} \bigg\{\xi \in \mathbb{R}^n \mid (\xi,0) \in \partial_{\varepsilon +\eta} \big (f +\delta_{\gph G}(\cdot)\big ) (\bar x,  y) \bigg\}.
			\end{split}
		\end{equation}
		We will clarify that 
		\begin{align}\label{N_Formula_2}
			\partial_{\varepsilon +\eta}\!\big (f +\delta_{\gph G}(\cdot)\big ) (\bar x,  y)\!\!=\! \! \bigcup_{(\gamma_1, \gamma_2) \,\in\, \varGamma(\eta+ \varepsilon)}
			\bigg\{ \partial_{\gamma_1}f(\bar x, y)\! +\!N_{\gamma_2}((\bar x, y); {\rm gph}\, G)  \bigg\},
		\end{align}		
		where $ \varGamma(\eta+ \varepsilon)=\{(\gamma_1, \gamma_2) \mid \gamma_1 \ge 0,\ \gamma_2 \ge 0,\ \gamma_1+\gamma_2=\varepsilon+\eta\}.$  
		Indeed, suppose that~\eqref{RC} is fulfilled. Since ${\rm gph}\,G$ is nearly convex, $\delta_{\gph G}: \mathbb{R}^n \times \mathbb{R}^p \to \overline {\R}$ is nearly convex. Moreover, $\dom \delta_{\gph G}={\rm gph}\,G$. So thanks to Theorem~\ref{sum-rule}, we have
		\begin{displaymath}
			\partial_{\varepsilon+\eta} \big (f +\delta_{\gph G}(\cdot)\big )(\bar x, y)=
			\bigcup_{(\gamma_1, \gamma_2) \,\in\, \varGamma(\eta+ \varepsilon)} \bigg \{  \partial_{\gamma_1}f(\bar x,  y)+ \partial_{\gamma_2}\delta_{\gph G}((\bar x, y))\bigg \},
		\end{displaymath}
		for any $(\bar x, y) \,\in\, {\rm dom}\, f \cap\, {\rm gph}\, G$. Meanwhile, by the definition $$\partial_{\gamma_2}\delta_{\gph G}\big ((\bar x, y)\big )= N_{\gamma_2}\big ((\bar x, y); {\rm gph}\, G \big ).$$ Combining \eqref{N_Formula_1} with \eqref{N_Formula_2} gives the statement of the theorem.
	\end{proof}
	
	\medskip
	
	Let us consider an illustrative example for Theorem~\ref{thm_sensitivity_1}.
	\begin{example}\rm Let $G(x)=\{y \in \Bbb{R}\mid  y \ge |x| \}$ and
		$$ f(x,y)= \begin{cases} \tfrac{1}{2}|x|+\tfrac{3}{2}|y| & \mbox{if } (x,y)\in [-1,1] \times [-1,1] \setminus \left(\tfrac{1}{2},1\right),\\
			\infty & \mbox{ otherwise}.
		\end{cases}
		$$	
		Then the optimal value function $m$ of problem~\eqref{math_program1} is 		
		$$
		m(x):= \inf \left\{f (x,y)\mid  y \in G(x)\right\}=2|x|, \ \mbox{for all } x\in \mathbb{R}.
		$$
		For $\bar x = 0,$ the approximate solution set is
		\begin{align*}
			S_\eta(\bar x)&= \{ y \in G(\bar x) \mid f (\bar x, y) \le m (\bar x)+\eta \},\ \, \forall \eta > 0\\
			&=\{ y \in G(\bar x) \mid \tfrac{3}{2}|y| \le \eta \},\ \, \forall \eta > 0\\
			&=\{0\}.
		\end{align*} 
		For every $\varepsilon \ge 0$, from~\cite[pp. 93–94]{H_L_1993} and Proposition~\ref{pro2.3}, one has
		$$\partial_{\varepsilon} m(x)=\partial_{\varepsilon}(2|x|)=2\partial_{\frac{\varepsilon}{2}}(|x|)=\begin{cases} \left[-2,-2-\tfrac{\varepsilon}{x}\right] & \mbox{if } x<-\tfrac{1}{4}\varepsilon,\\
			[-2,2] & \mbox{if } -\tfrac{1}{4}\varepsilon \le x\le\tfrac{1}{4}\varepsilon,\\
			\left[2-\tfrac{2\varepsilon}{x},2\right] & \mbox{if } x>\tfrac{1}{4}\varepsilon.
		\end{cases}
		$$
		Hence,
		$\partial_{\varepsilon} m(\bar x)=[-2,2].$ For any $\gamma_1 \ge 0$, $\gamma_2\ge 0$, one has
		$\partial_{\gamma_1} f(x,y)\ \subseteq\ \partial_{\gamma_1}f_{1}(x)\times \partial_{\gamma_1}f_{2}(y)$
		where $f_1(x) = \tfrac{1}{2}|x|, f_2(y)=\tfrac{3}{2}|y|$. From Proposition~\ref{pro2.3} and~\cite[pp. 93–94]{H_L_1993}, one gets
		$$\partial_{\gamma_1} f_{1}(x)=\partial_{\gamma_1}\left(\tfrac{1}{2}|x|\right)=\tfrac{1}{2}\partial_{2\gamma_1}(|x|)=\begin{cases} \left[-\tfrac{1}{2},-\tfrac{1}{2}-\tfrac{\gamma_1}{x}\right] & \mbox{if } x<-\gamma_1,\\
			\\
			\left[-\tfrac{1}{2},\tfrac{1}{2}\right] & \mbox{if } -\gamma_1 \le x\le\gamma_1,\\
			\\
			\left[\tfrac{1}{2}-\tfrac{\gamma_1}{x},\tfrac{1}{2}\right] & \mbox{if } x>\gamma_1,\\
		\end{cases}
		$$
		and
		$$\partial_{\gamma_1} f_{2}(y)=\partial_{\gamma_1}\left(\tfrac{3}{2}|y|\right)=\tfrac{3}{2}\partial_{2\gamma_1}(|y|)=\begin{cases} \left[-\tfrac{3}{2},-\tfrac{3}{2}-\tfrac{3\gamma_1}{y}\right] & \mbox{if } y<-\gamma_1,\\
			\\
			\left[-\tfrac{3}{2},\tfrac{3}{2}\right] & \mbox{if } -\gamma_1 \le y\le\gamma_1,\\
			\\
			\left[\tfrac{3}{2}-\tfrac{3\gamma_1}{y},\tfrac{3}{2} \right] & \mbox{if } y>\gamma_1.
		\end{cases}
		$$
		Hence, at  $\bar x=0, $ $\bar y=0$,
		$\partial_{\gamma_1} f_1(\bar x)=\left[-\tfrac{1}{2},-\tfrac{1}{2}\right]$ and 
		$\partial_{\gamma_1} f_2(\bar y)=\left[-\tfrac{3}{2},-\tfrac{3}{2}\right]$. Consequently,
		$$\partial_{\gamma_1} f(\bar x, \bar y)=\left[-\tfrac{1}{2},-\tfrac{1}{2}\right] \times \left[-\tfrac{3}{2},-\tfrac{3}{2}\right].$$
		On the other hand, by~\cite[Example 2.1]{Hiriart_1989}, one has
		$$N_{\gamma_2}\big((\bar x,  y); {\rm gph}\, G \big)  = \begin{cases} \{(0, 0)\}  & \mbox{if } y>0,\\
			\{ (u,v) \in \Bbb{R}^2 \mid v\le-|u|\}& \mbox{if } y=0,\\
			\emptyset  & \mbox{if } y<0.
		\end{cases}
		$$
		Therefore,	
		\begin{align*}
			&	\bigcap\limits_{\eta >0} 
			\bigcap\limits_{ y \,\in\, S_\eta (\bar x)}
			\bigcup_{(\gamma_1, \gamma_2)\, \in \, \varGamma(\eta+\varepsilon)}
			\bigg\{\xi \in \mathbb{R} \mid (\xi,0) \in \partial_{\gamma_1}f(\bar x,y) \!+\!N_{\gamma_2}\big ((\bar x,  y); {\rm gph}\, G \big ) \bigg\}\\
			&=  \bigg\{\xi \in \mathbb{R} \mid (\xi,0) \in \left[-\tfrac{1}{2},-\tfrac{1}{2}\right] \times \left[-\tfrac{3}{2},-\tfrac{3}{2}\right]+ \{ (u,v) \in \Bbb{R}^2 \mid v\le-|u|\}\bigg\}\\  
			&=\big\{\xi \in \mathbb{R} \mid \xi \in [-2,2] \big\},
		\end{align*}
		which justifies the conclusion of Theorem~\ref{thm_sensitivity_1}.	
	\end{example}


	\section*{Acknowledgments} 
	This research is funded by Vietnam 
National Foundation for Science and Technology Development (NAFOSTED) under grant number 101.01-2023.23.

\end{document}